\providecommand{\U}[1]{\protect\rule{.1in}{.1in}}
\newtheorem{theorem}{Theorem}
\newtheorem{lemma}[theorem]{Lemma}
\newtheorem{proposition}[theorem]{Proposition}
\newtheorem{remark}[theorem]{Remark}
\newenvironment{proof}[1][Sketch of the proof]{\noindent\textbf{#1.} }{\ \rule{0.5em}{0.5em}}
\begin{document}

\title{Stochastic and statistical stability of the classical Lorenz flow under
perturbations modeling anthropogenic type forcing}
\author{Michele Gianfelice$%
{{}^\circ}%
$\\{\small $%
{{}^\circ}%
$Dipartimento di Matematica e Informatica, Universit\`{a} della Calabria} \\{\small Ponte Pietro Bucci, cubo 30B, I-87036 Arcavacata di Rende (CS)} \\{\small \texttt{gianfelice@mat.unical.it}} }
\maketitle

\begin{abstract}
We review the results obtained in \cite{GMPV} and \cite{GV} on the stochastic
and statistical stability of the classical Lorenz flow, where, looking at the
Lorenz'63 ODE system as a simple - yet non trivial - model of the atmospheric
circulation, the perturbation schemes introduced in these papers are designed
to represent the effect of the so called anthropogenic forcing on the dynamics
of the atmosphere.

\end{abstract}
\tableofcontents

\bigskip

\begin{description}
\item[AMS\ subject classification:] {\small 34F05, 93E15. }

\item[Keywords and phrases:] {\small Random perturbations of dynamical
systems, classical Lorenz flow, random dynamical systems, semi-Markov random
evolutions, piecewise deterministic Markov processes, Lorenz'63 model,
anthropogenic forcing.}

\item[Ethics declaration:] {\small the corresponding author states that there is no conflict of interest.}
\end{description}

\bigskip

\section{Introduction}

\subsection{The classical Lorenz flow}

Turbulent systems such the atmosphere are usually modeled by flows exhibiting
a sensitive dependence on the initial conditions. The behaviour of the
trajectories of the system in the phase space for large times is usually
numerically very hard to compute and consequently the same computational
difficulty affects also the computation of the phase averages of physically
relevant observables. A way to overcome this problem is to select a few of
these relevant observables under the hypothesis that the statistical
properties of the smaller system defined by the evolution of such quantities
can capture the important features of the statistical behaviour of the
original system \cite{NVKDF}.

This turns out to be the case when considering \emph{classical Lorenz model},
or, in the physics literature, \emph{Lorenz'63 model}, that is the system of
equation
\begin{equation}
\left\{
\begin{array}
[c]{l}%
\dot{x}_{1}=-\zeta x_{1}+\zeta x_{2}\\
\dot{x}_{2}=-x_{1}x_{3}+\gamma x_{1}-x_{2}\\
\dot{x}_{3}=x_{1}x_{2}-\beta x_{3}%
\end{array}
\right.  \ , \label{L}%
\end{equation}
which was introduced by E. Lorenz in his celebrated paper \cite{Lo} as a
simplified yet non trivial model for thermal convection of the atmosphere and,
since then, it has been pointed out as the typical real example of a
non-hyperbolic three-dimensional flow whose trajectories show a sensitive
dependence on initial conditions. More precisely, the classical Lorenz flow,
for $\zeta=10,\gamma=28,\beta=8/3,$ has been proved in \cite{Tu}, and more
recently in \cite{AM}, to show the same dynamical features of its ideal
counterpart the so called \emph{geometric Lorenz flow}, introduced in
\cite{ABS} and in \cite{GW}, which represents the prototype of a
three-dimensional flow exhibiting a partially hyperbolic attractor \cite{AP}.

\subsection{Physical motivations for the study of the stability of the
statistical properties of the classical Lorenz flow}

The analysis of the stability of the statistical properties of the classical
Lorenz flow can provide a theoretical framework for the study of climate
changes, in particular those induced by the anthropogenic influence on climate dynamics.

A possible way to study this problem is to add a weak perturbing term to the
phase vector field generating the atmospheric flow which model the atmospheric
circulation: the so called \emph{anthropogenic forcing}. Assuming that the
atmospheric circulation is described by a model exhibiting a robust singular
hyperbolic attractor, as it is the case for the classical Lorenz flow, it has
been shown empirically that the effect of the perturbation can possibly affect
just the statistical properties of the system \cite{Pa}, \cite{CMP}.
Therefore, because of its very weak nature (small intensity and slow
variability in time), a practical way to measure the impact of the
anthropogenic forcing on climate statistics is to look at the extreme value
statistics of those particular observables whose evolution may be more
sensitive to it \cite{Su}. In the particular case these observables are given
by bounded (real valued) functions on the phase space, an effective way to
look at their extreme value statistics is to look first at the statistics of
their extrema and then eventually to the extreme value statistics of these
making use, for example, of the techniques described in \cite{Letal}.

\subsubsection{Stability of the invariant measure of the classical Lorenz
flow}

Since $C^{1}$ perturbations of the classical Lorenz vector field admit a
$C^{1+\epsilon}$ stable foliation \cite{AM} and since the geometric Lorenz
attractor is robust in the $C^{1}$ topology \cite{AP}, it is natural to
discuss the statistical and the stochastic stability of the classical Lorenz
flow under this kind of perturbations.

As a matter of fact, in applications to climate dynamics, when considering the
Lorenz'63 flow as a model for the atmospheric circulation, the analysis of the
stability of the statistical properties of the unperturbed flow under
perturbations of the velocity phase field of this kind can turn out\ to be a
useful tool in the study of the so called \emph{anthropogenic climate change}
\cite{CMP}.

\section{Statistical stability\label{statstab}}

Since the SRB measure of the geometric Lorenz flow can be constructed starting
from the invariant measure of the one-dimensional map obtained through
reduction to the quotient leaf space of the Poincar\'{e} map on a
two-dimensional manifold transverse to the flow \cite{AP}, the statistical
stability for the invariant measure of this map implies that of the SRB
measure of the unperturbed flow. Results in this direction are given in
\cite{AS}, \cite{BR} and \cite{GL} where strong statistical stability of the
geometric Lorenz flow is analysed.

For what concerns the classical Lorenz flow in \cite{GMPV} it has been shown
that the effect of an additive constant perturbation term to the classical
Lorenz vector field results into a particular kind of perturbation of the map
of the interval describing the evolution of the maxima of the Casimir function
for the (+) Lie-Poisson brackets associated to the $so\left(  3\right)  $
algebra. Moreover, it has been proved that the invariant measures for the
perturbed and for the unperturbed 1-$d$ maps of this kind have Lipschitz
continuous density and that the unperturbed invariant measure is strongly
statistically stable.

More precisely, the vector field (\ref{L}) has the interesting feature that it
can be rewritten as
\begin{equation}
\left\{
\begin{array}
[c]{l}%
\dot{y}_{1}=-\zeta y_{1}+\zeta y_{2}\\
\dot{y}_{2}=-y_{1}y_{3}-\gamma y_{1}-y_{2}\\
\dot{y}_{3}=y_{1}y_{2}-\beta y_{3}-\beta\left(  \gamma+\zeta\right)
\end{array}
\right.  \ , \label{L1}%
\end{equation}
showing the corresponding flow to be generated by the sum of a Hamiltonian
$SO\left(  3\right)  $-invariant field and a gradient field (we refer the
reader to \cite{GMPV} and references therein). Therefore, as it has been
proved in \cite{GMPV}, the invariant measure of the classical Lorenz flow can
be constructed starting from the invariant measure of a map of the interval
$T,$ whose graph (see fig. 1) looks like that originally computed by Lorenz
(\cite{Lo} fig. 4) describing the evolution of the extrema of the first
integrals of the associated Hamiltonian flow \cite{PM} such as, for example,
the just mentioned Casimir function $C\left(  t\right)  :=\left\vert
Y^{t}\right\vert ^{2},$ where $\left\vert \cdot\right\vert $ is the Euclidean
norm in $\mathbb{R}^{3}$ and $\left(  Y^{t}\ ,t\geq0\right)  $ denotes the
flow defined by (\ref{L1}). In this case, denoting by $\phi_{0}$ the vector
field defined in (\ref{L1}), since $\dot{C}=\phi_{0}\cdot\nabla C$ and
$\ddot{C}=\phi_{0}\cdot\nabla\left(  \phi_{0}\cdot\nabla C\right)  ,$ setting,
for $\epsilon$ sufficiently small,%
\begin{equation}
\mathcal{M}_{\epsilon}:=\left\{  \left(  y_{1},y_{2},y_{3}\right)
\in\mathbb{R}^{3}:\left\vert y_{1}\right\vert \leq\epsilon,\left\vert
y_{2}\right\vert \leq\epsilon,y_{3}\in\left[  -\left(  \gamma+\zeta\right)
,\epsilon-\left(  \gamma+\zeta\right)  \right]  \right\}
\end{equation}
the Poincar\'{e} surface taken into account is
\begin{equation}
\mathcal{M}:=\left\{  \left(  y_{1},y_{2},y_{3}\right)  \in\mathcal{M}%
_{\epsilon}:\left(  \phi_{0}\cdot\nabla C\right)  \left(  y_{1},y_{2}%
,y_{3}\right)  =0,\phi_{0}\cdot\nabla\left(  \phi_{0}\cdot\nabla C\right)
\left(  y_{1},y_{2},y_{3}\right)  \leq0\right\}
\end{equation}
Note that $\mathcal{M}$ contains the hyperbolic critical point $c_{0}:=\left(
0,0,-\left(  \gamma+\zeta\right)  \right)  $ of $\phi_{0}.$ Denoting by
$\bar{T}$ the automorphism of the set of the leaf of the invariant foliation
defined by the intersection of the stable manifolds of the flow with
$\mathcal{M},T$ is then obtained identifying the leaves of the invariant
foliation corresponding to the same values of $C.$

\subsection{The Lorenz-like cusp map $T$ and its invariant measure}

The local behaviors of the branches of $T$ are ($c$ will denote a positive
constant which could take different values from one formula to another):
\begin{align}
&  \left\{
\begin{array}
[c]{l}%
T(x)=\alpha^{\prime}x+\beta^{\prime}x^{1+\psi}+o(x^{1+\psi});\ x\rightarrow
0^{+}\\
DT(x)=\alpha^{\prime}+cx^{\psi}+o(x^{\psi}),\ \alpha^{\prime}>1;\ \beta
^{\prime}>0;\ \psi>1
\end{array}
\right.  \ ,\label{T_DT1}\\
&  \left\{
\begin{array}
[c]{l}%
T(x)=\alpha(1-x)+\tilde{\beta}(1-x)^{1+\kappa}+o((1-x)^{1+\kappa
});\ x\rightarrow1^{-}\\
DT(x)=-\alpha-c(1-x)^{\kappa}+o((1-x)^{\kappa}),\ 0<\alpha<1,\ \tilde{\beta
}>0,\ \kappa>1
\end{array}
\right.  \ ,\label{T_DT2}\\
&  \left\{
\begin{array}
[c]{l}%
T(x)=1-A^{\prime}(x_{0}-x)^{B^{\prime}}+o((x_{0}-x)^{B^{\prime}}%
);\ x\rightarrow x_{0}^{-},\ A^{\prime}>0\\
DT(x)=c(x_{0}-x)^{B^{\prime}-1}+o((x_{0}-x)^{B^{\prime}-1}),\ 0<B^{\prime}<1
\end{array}
\right.  \ ,\label{T_DT3}\\
&  \left\{
\begin{array}
[c]{l}%
T(x)=1-A(x-x_{0})^{B}+o((x-x_{0})^{B});\ x\rightarrow x_{0}^{+},\ A>0\\
DT(x)=-c(x-x_{0})^{B-1}+o((x-x_{0})^{B-1}),\ 0<B<1
\end{array}
\right.  \ . \label{T_DT4}%
\end{align}
From this we deduce that $T^{-1}$ is $C^{1+\iota},$ for some $\iota\in\left(
0,1\right)  ,$ on each open interval $(0,x_{0}),(x_{0},1).$ Indeed, by the
result in \cite{AM}, the stable foliation for the classical Lorenz flow is
$C^{1+\alpha}$ for some $\alpha\in\left(  0.278,1\right)  ,$ which means, by
(\ref{T_DT3}) and (\ref{T_DT4}), that, for any $x\in(0,x_{0}),T^{\prime
}\left(  x\right)  =\left\vert x_{0}-x\right\vert ^{1-B^{\prime}}\left[
1+G_{1}\left(  x\right)  \right]  $ with $G_{1}\in C^{\alpha B^{\prime}%
}(0,x_{0})$ and, for any $x\in(x_{0},1),T^{\prime}\left(  x\right)
=\left\vert x-x_{0}\right\vert ^{1-B}\left[  1+G_{2}\left(  x\right)  \right]
$ with $G_{2}\in C^{\alpha B}(x_{0},1).$ In particular this implies that for
any couple of points $x,y$ belonging either to $(0,x_{0})$ or to $(x_{0},1)$
\begin{equation}
|T^{\prime}(x)-T^{\prime}(y)|\leq C_{h}\left\vert T^{\prime}(x)\right\vert
\left\vert T^{\prime}(y)\right\vert \left\vert x-y\right\vert ^{\iota}\ ,
\label{H}%
\end{equation}
where $\iota\in(0,1-B^{\ast}],$ with $B^{\ast}:=B\vee B^{\prime},$ and the
constant $C_{h}$ is independent of the location of $x$ and $y.$

\begin{figure}[ptbh]
\centering
\resizebox{0.75\textwidth}{!}{\includegraphics{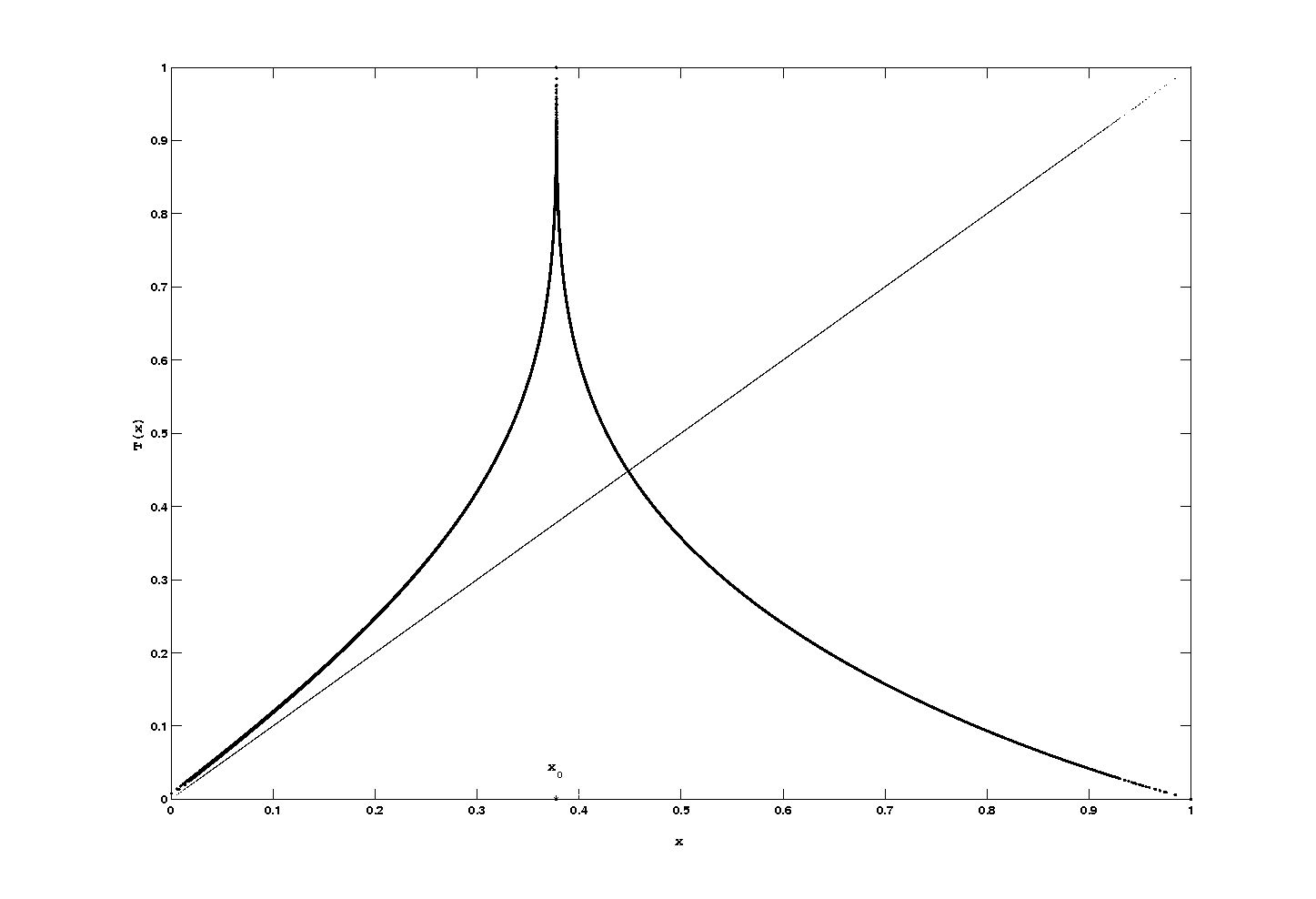}
}
\caption{Normalized Lorenz cusp map for the Casimir maxima}%
\label{fig:1}%
\end{figure}

\begin{proposition}
The density $\rho$ of the invariant measure $\mu$ is Lipschitz continuous and
bounded over the intervals $[0,1]$. Moreover,%
\begin{equation}
\lim_{x\rightarrow0^{+}}\rho(x)=\lim_{x\rightarrow1^{-}}\rho(x)=0\ .
\end{equation}

\end{proposition}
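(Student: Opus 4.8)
The plan is to analyse the transfer (Perron--Frobenius) operator $\mathcal{L}$ of $T$ with respect to Lebesgue measure on $[0,1]$. By (\ref{T_DT1})--(\ref{T_DT4}), $T$ has the two onto branches $T|_{(0,x_{0})}$ and $T|_{(x_{0},1)}$; writing $\tau_{1}\colon[0,1]\to[0,x_{0}]$ and $\tau_{2}\colon[0,1]\to[x_{0},1]$ for the inverse branches,
\begin{equation}
(\mathcal{L}g)(x)=\sum_{i=1,2}\frac{g(\tau_{i}(x))}{\left\vert T^{\prime}(\tau_{i}(x))\right\vert }=\sum_{i=1,2}g(\tau_{i}(x))\left\vert \tau_{i}^{\prime}(x)\right\vert ,
\end{equation}
and $\rho$ is characterized by $\mathcal{L}\rho=\rho$, $\int_{0}^{1}\rho=1$. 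First I would extract from (\ref{T_DT1})--(\ref{T_DT4}) the structure of $T$: the endpoint $0$ is a fixed point with multiplier $\alpha^{\prime}>1$ (repelling); on a one-sided neighbourhood of $1$, $\left\vert T^{\prime}\right\vert \downarrow\alpha<1$, so there $T$ contracts; $\left\vert T^{\prime}\right\vert $ blows up at $x_{0}$, the unique preimage of the value $1$; and $T^{-1}(0)=\{0,1\}$. Since the contracting neighbourhood of $1$ is the forward image of a neighbourhood of $x_{0}$, on which $\left\vert T^{\prime}\right\vert $ is arbitrarily large, every contraction along an orbit is preceded by an unboundedly strong expansion, while lingering near the repelling fixed point $0$ is ruled out by $\alpha^{\prime}>1$; hence \emph{eventual expansion} holds: there exist $n_{0}\in\mathbb{N}$ and $\Lambda>1$ with $\inf_{[0,1]}\left\vert (T^{n_{0}})^{\prime}\right\vert \geq\Lambda$.

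Next, the distortion estimate (\ref{H}) --- which says exactly that $1/T^{\prime}$ is $\iota$-H\"{o}lder with a branch-uniform constant --- propagated along the inverse branches gives bounded distortion for all iterates of $T$; since moreover every branch of $T^{n}$ is onto $[0,1]$, this yields $\left\Vert \mathcal{L}^{n}\mathbf{1}\right\Vert _{\infty}\leq D$ uniformly in $n$, and, together with $\psi,\kappa>1$ in (\ref{T_DT1})--(\ref{T_DT2}) and with $B^{\ast}=B\vee B^{\prime}\leq 1/2$, that the inverse-branch derivatives of $T^{n_{0}}$ are Lipschitz with branch-uniform norm. Differentiating $\mathcal{L}^{n_{0}}g$ then gives a Lasota--Yorke (Doeblin--Fortet) inequality in the norm $\left\Vert g\right\Vert _{\mathrm{Lip}}=\left\Vert g\right\Vert _{\infty}+\mathrm{Lip}(g)$,
\begin{equation}
\mathrm{Lip}(\mathcal{L}^{n_{0}}g)\leq\Lambda^{-1}\left\Vert \mathcal{L}^{n_{0}}\mathbf{1}\right\Vert _{\infty}\mathrm{Lip}(g)+C\left\Vert g\right\Vert _{\infty},\qquad\left\Vert \mathcal{L}^{n_{0}}g\right\Vert _{\infty}\leq D\left\Vert g\right\Vert _{\infty},
\end{equation}
in which the first coefficient is $<1$ once $n_{0}$ is large (as $\Lambda=\Lambda(n_{0})\to\infty$ while $\left\Vert \mathcal{L}^{n_{0}}\mathbf{1}\right\Vert _{\infty}\leq D$) and the $C\left\Vert g\right\Vert _{\infty}$ term comes from the Lipschitz bound on the weights. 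Since the unit ball of the Lipschitz space is relatively compact in $C^{0}$, $\mathcal{L}$ is quasi-compact on Lipschitz functions; transitivity of $T$ makes $1$ a simple eigenvalue, so its eigenfunction $\rho$ is Lipschitz continuous, and therefore bounded, on $[0,1]$. (Equivalently one checks that $\mathcal{L}^{n_{0}}$ preserves, and contracts the Hilbert metric of, a suitable cone of nonnegative Lipschitz functions vanishing at $0,1$ with controlled $\mathrm{Lip}/\sup$ ratio.)

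The boundary values then follow from $\rho=\mathcal{L}\rho$, which holds on all of $[0,1]$ since $\rho$ (hence $\mathcal{L}\rho$) is continuous. At $x=1$ both inverse branches have value $x_{0}$, and by (\ref{T_DT3})--(\ref{T_DT4}) they extend continuously to $1$ with $\tau_{1}^{\prime}(1)=\tau_{2}^{\prime}(1)=0$ (because $\left\vert T^{\prime}\right\vert =+\infty$ at $x_{0}$), so $\rho(1)=\left\vert \tau_{1}^{\prime}(1)\right\vert \rho(x_{0})+\left\vert \tau_{2}^{\prime}(1)\right\vert \rho(x_{0})=0$. At $x=0$ the inverse branches take the values $0$ (the fixed point) and $1$, whence $\rho(0)=\rho(0)/\left\vert T^{\prime}(0)\right\vert +\rho(1)/\left\vert T^{\prime}(1)\right\vert =\rho(0)/\alpha^{\prime}$ by what we just showed; since $\alpha^{\prime}>1$, $\rho(0)=0$. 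By continuity, $\lim_{x\rightarrow0^{+}}\rho(x)=\lim_{x\rightarrow1^{-}}\rho(x)=0$.

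The hard part will be the quantitative core of the first two steps: establishing that the contraction near $1$ is always dominated along orbits, so that eventual expansion holds with $\Lambda$ large enough to close the Lasota--Yorke estimate, while keeping honest control of the distortion near the singular point $x_{0}$. This is where the precise exponents $B,B^{\prime},\psi,\kappa$ of the classical Lorenz Casimir map enter, the Lipschitz (rather than merely H\"{o}lder) conclusion hinging on $B^{\ast}=B\vee B^{\prime}\leq 1/2$, equivalently on $\iota$ in (\ref{H}) being allowed $\geq 1/2$.
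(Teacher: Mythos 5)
Your plan takes a genuinely different route from the paper -- a direct Lasota--Yorke/quasi-compactness argument for the transfer operator on the whole interval, versus the paper's inducing on $I=(a_{0}^{\prime},a_{0})\setminus\{x_{0}\}$ followed by the Pianigiani reconstruction formula (\ref{reldens}) -- but it has a genuine gap exactly at the step you yourself flag as ``the hard part'': the eventual uniform expansion $\inf_{[0,1]}\left\vert (T^{n_{0}})^{\prime}\right\vert \geq\Lambda>1$. This is not a consequence of the local expansions (\ref{T_DT1})--(\ref{T_DT4}). By (\ref{T_DT2}) one has $\left\vert T^{\prime}\right\vert$ close to $\alpha<1$ on a whole one-sided neighbourhood of $1$, and more generally $\left\vert T^{\prime}\right\vert<1$ on some interval $(c,1)$; your compensation argument (``every contraction is preceded by an unboundedly strong expansion near $x_{0}$'') only pairs a visit to a point \emph{very} close to $1$ with a preimage \emph{very} close to $x_{0}$. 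For points of $(c,1)$ at moderate distance from $1$ the preceding derivative near $x_{0}$ is merely moderate, and whether orbits can spend consecutive steps in $(c,1)$ (i.e.\ whether $T(c)>c$), and whether the products close up uniformly, is global quantitative information about $T$ that the stated asymptotics do not provide. The same gap infects your claim $\left\Vert \mathcal{L}^{n}\mathbf{1}\right\Vert _{\infty}\leq D$, since bounded distortion for \emph{all} iterates requires summability of $\left\vert T^{k}x-T^{k}y\right\vert ^{\iota}$ along inverse branches, i.e.\ backward contraction -- again eventual expansion. It is precisely to avoid this that the paper induces: on $I$ the first return map \emph{is} uniformly expanding, with the quantitative input supplied by the explicit scalings $(x_{0}-b_{p}^{\prime})\sim c(\alpha^{\prime})^{-p/B^{\prime}}$, $(b_{p}-x_{0})\sim c(\alpha^{\prime})^{-p/B}$ giving $\sup_{[b_{i+1},b_{i}]}\left\vert DT\right\vert \,\left\vert b_{i}-b_{i+1}\right\vert =O((\alpha^{\prime})^{-i})$. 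Note also that the paper's alternative remark obtains $\inf\left\vert \overline{T}^{\prime}\right\vert>1$ only after conjugating by a map $W$ whose derivative \emph{vanishes} at $0$ and $1$; the need for such a non-bi-Lipschitz change of coordinates is a strong hint that $T$ itself is not eventually uniformly expanding in any way deducible from (\ref{T_DT1})--(\ref{T_DT4}).

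Two secondary points. First, your Lipschitz regularity of the inverse-branch weights near $x=1$ (where $\left\vert \tau_{i}^{\prime}(x)\right\vert \sim(1-x)^{(1-B^{\prime})/B^{\prime}}$) requires $B^{\ast}\leq1/2$, which is an additional hypothesis: the paper only asserts $0<B,B^{\prime}<1$ and $\iota\in(0,1-B^{\ast}]$, so you would need to extract the numerical value of $B^{\ast}$ for the actual Casimir map from \cite{GMPV}. Second, your boundary computation is clean and correct once Lipschitz continuity and the pointwise fixed-point equation $\rho=\mathcal{L}\rho$ are granted: $\rho(1)=0$ because both inverse branches land at $x_{0}$ where $\left\vert T^{\prime}\right\vert=\infty$, and then $\rho(0)=\rho(0)/\alpha^{\prime}$ forces $\rho(0)=0$. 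That part would be a worthwhile addition even to the paper's own argument, which leaves the vanishing at the endpoints implicit in the inducing construction.
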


\begin{proof}
Let us set: $a_{0}:=T_{2}^{-1}x_{0};$\ $a_{0}^{\prime}:=T_{1}^{-1}x_{0}%
;$\ $a_{p}^{\prime}=T_{1}^{-p}a_{0}^{\prime};$\ $a_{p}=T_{2}^{-1}%
T_{1}^{-(p-1)}a_{0}^{\prime},$\ $p\geq1.$ We also define the sequences
$\{b_{p}\}_{p\geq1}\subset(x_{0},a_{0})$ and $\{b_{p}^{\prime}\}_{p\geq
1}\subset(a_{0}^{\prime},x_{0})$ as $Tb_{p}^{\prime}=Tb_{p}=a_{p-1}.$ Hence,
we can induce on $I:=(a_{0}^{\prime},a_{0})\backslash\{x_{0}\}$ and to replace
the action of $T$ on $I$ with that of the first return map $T_{I}$ into $I$
and prove that the systems $(I,T_{I})$ will admit an absolutely continuous
invariant measure $\mu_{I}$ which is in particular equivalent to the Lebesgue
measure with a density $\rho_{I}$ bounded from below and from above. To do
this, following \cite{CHMV}, we also need to induce over the open sets
$(a_{n}^{\prime},a_{n-1}^{\prime})$ and $(a_{n},a_{n+1}),n>1,$ simply denoted
in the following as the rectangles $I_{n},$ provided we show that the induced
maps are aperiodic uniformly expanding Markov maps with bounded distortion on
each set with prescribed return time. On the sets $I_{n}$ the first return map
$T_{I_{n}}$ is Bernoulli, while the aperiodicity condition on $I$ follows by
direct inspection of the graph of the first return map $T_{I}%
:I\circlearrowleft$ showing that it maps: $(a_{0}^{\prime},b_{1}^{\prime})$
onto $(x_{0},a_{0});$ the intervals $(b_{l}^{\prime},b_{l+1}^{\prime}%
),\ l\geq1,$ onto the interval $(a_{0}^{\prime},x_{0});$ the interval
$(b_{1},a_{0})$ onto $(x_{0},a_{0})$ and finally the intervals $(b_{l+1}%
,b_{l}),\ l\geq1$ onto $(a_{0}^{\prime},x_{0}).$ The proof of the boundedness
of the distortion is analogous to that given in Proposition 3 of \cite{CHMV}
and rely on the proof that the first return maps are uniformly expanding. In
particular, in the initial formula (5) in \cite{CHMV} we need now to replace
the term $\left\vert \frac{D^{2}T(\xi)}{DT(\xi)}\right\vert |T^{q}\left(
x\right)  -T^{q}\left(  y\right)  |,$ where $\xi$ is a point between
$T^{q}\left(  x\right)  $ and $T^{q}\left(  y\right)  ,$ with $\frac
{1}{|DT(\xi)|}C_{h}|DT(T^{q}\left(  x\right)  )||DT(T^{q}\left(  y\right)
)||T^{q}\left(  x\right)  -T^{q}\left(  y\right)  |^{\iota}$ which is smaller
than $C_{h}\left(  |DT(T^{q}\left(  x\right)  )|\vee|DT(T^{q}\left(  y\right)
)|\right)  |T^{q}\left(  x\right)  -T^{q}\left(  y\right)  |^{\iota}$ by
monotonicity of $\left\vert DT\right\vert .$ The key estimate (11) in
\cite{CHMV} will reduce in our case to the bound of the quantity $\sup_{\xi
\in\lbrack b_{i+1},b_{i}]}|DT\left(  \xi\right)  ||b_{i}-b_{i+1}|.$ By using
for $DT$ the expressions given in the formulas (\ref{T_DT3}) and
(\ref{T_DT4}), and for the $b_{i}$ the scaling $(x_{0}-b_{p}^{\prime}%
)\sim\frac{c}{\left(  \alpha^{\prime}\right)  ^{\frac{p}{B^{\prime}}}%
};\ (b_{p}-x_{0})\sim\frac{c}{\left(  \alpha^{\prime}\right)  ^{\frac{p}{B}}}$
(see formula (75) of \cite{GMPV}) we immediately get that the above quantity
is of order $\frac{1}{(\alpha^{\prime})^{i}},$ which is enough to pursue the
argument about the estimate of the distortion presented in \cite{CHMV}.

The invariant measure $\mu_{I}$ for the induced map $T_{I}$ is related to the
invariant measure $\mu$ over the whole interval by the Pianigiani formula
\begin{equation}
\mu(B)=C_{r}\sum_{i}\sum_{j=0}^{\tau_{i}-1}\mu_{I}(T^{-j}(B)\cap Z_{i})
\label{reldens}%
\end{equation}
where $B$ is any Borel set in $[0,1]$ and the first sum runs over the
cylinders $Z_{i}$
\begin{align}
Z_{1}  &  =(a_{0}^{\prime},b_{1}^{\prime})\cup(b_{1},a_{0})\label{SI}\\
Z_{i}  &  =(b_{i-1}^{\prime},b_{i}^{\prime})\cup(b_{i},b_{i-1})\quad i\geq2\ ,
\label{SII}%
\end{align}
with prescribed first return time $\tau_{i}$ and whose union gives $I.$ The
normalizing constant $C_{r}=\mu(I)$ satisfies $1=C_{r}\sum_{i}\tau_{i}\mu
_{I}(Z_{i}).$ This immediately implies that by calling $\hat{\rho}$ the
density of $\mu_{I}$ we have that $\rho(x)=C_{r}\hat{\rho}(x)$ for Lebesgue
almost every $x\in I$ and therefore $\rho$ can be extended to a Lipschitz
continuous function on $I$ as $\hat{\rho}.$
\end{proof}

Anyway, we remark that the existence of an invariant measure for $T$ follows
also by combining Theorem 2 in \cite{Pi2} and the results in section 4.2 of
\cite{Bu} since one can check by direct computation that the map $T:=W\circ
T\circ W^{-1},$ where $W$ is the distribution function associated to the
probability measure on $\left(  \left[  0,1\right]  ,\mathcal{B}\left(
\left[  0,1\right]  \right)  \right)  $ with density
\begin{equation}
\left[  0,1\right]  \ni x\longmapsto W^{\prime}\left(  x\right)
:=N_{\bar{\gamma},\bar{\beta}}e^{-\bar{\gamma}x}x^{\bar{\beta}}\left(
1-x\right)  ^{\bar{\beta}}%
\end{equation}
(see formulas (83) and (84) in \cite{GMPV}) for suitably chosen parameters
$\bar{\gamma},\bar{\beta}>0,$ is such that $\inf\left\vert \overline
{T}^{\prime}\right\vert >1.$

\subsubsection{Statistical stability of $T$}

Let us denote by $\lambda$ the Lebesgue measure on $\left(  \mathbb{R}%
,\mathcal{B}\left(  \mathbb{R}\right)  \right)  $ and by $T_{\epsilon}$ the
perturbed map. We show that under the following assumptions the density
$\rho_{\epsilon}$ of the perturbed measure will converge to the density $\rho$
of the unperturbed one in the $L_{\lambda}^{1}$ norm.

\begin{itemize}
\item[\emph{Assumption A}] $T_{\epsilon}$ is a Markov map of the unit interval
which is one-to-one and onto on the intervals $[0,x_{\epsilon,0})$ and
$(x_{\epsilon,0},1],$ convex on both sides and of class $C^{1+\iota_{\epsilon
}}$ on the open interval $(0,x_{\epsilon,0})\cup(x_{\epsilon,0},1).$

\item[\emph{Assumption B}] Let $\left\Vert \cdot\right\Vert _{0}$ denotes the
$C^{0}$-norm on the unit interval, then
\begin{equation}
\lim_{\epsilon\rightarrow0}\left\Vert T_{\epsilon}-T\right\Vert _{0}=0\ .
\end{equation}
Moreover, $\forall x\in\lbrack0,1],\ x\neq x_{0},$ we can find $\epsilon(x)$
such that,\linebreak$\forall\epsilon<\epsilon(x),\ DT_{\epsilon}$ exists and
is finite and we have
\begin{equation}
\lim_{\epsilon\rightarrow0}DT_{\epsilon}(x)=DT(x)\ .
\end{equation}
Furthermore,
\begin{equation}
\lim_{x\rightarrow x_{0}^{+}}\lim_{\epsilon\rightarrow0}\frac{DT_{\epsilon
}(x)}{DT(x)}=\lim_{x\rightarrow x_{0}^{-}}\lim_{\epsilon\rightarrow0}%
\frac{DT_{\epsilon}(x)}{DT(x)}=1\ .
\end{equation}

\item[\emph{Assumption C}] Let us denote by $C_{h,\epsilon}$ and
$\iota_{\epsilon}$ respectively the H\"{o}lder constant and the H\"{o}lder
exponent for the derivative of $T_{\epsilon}$ on the open interval
$(0,x_{\epsilon,0})\cap(x_{\epsilon,0},1);$ namely: $|DT_{\epsilon
}(x)-DT_{\epsilon}(y)|\leq C_{h,\epsilon}|x-y|^{\iota_{\epsilon}}$ for any
$x,y$ either in $(0,x_{\epsilon,0})$ or in $(x_{\epsilon,0},1).$ We assume
$C_{h,\epsilon}$ and $\iota_{\epsilon}$ to converge to the corresponding
quantities for $T$ in the limit $\epsilon\rightarrow0.$

\item[\emph{Assumption D}] Let us set $d_{(\epsilon,1,0)}:=\inf_{(b_{\epsilon
,1},a_{\epsilon,0})}|DT_{\epsilon}(x)|.$ We assume $d_{(\epsilon,1,0)}>1$ and
that there exists a constant $d_{c}$ and $\epsilon_{c}=\epsilon\left(
d_{c}\right)  $ such that,\newline$\forall\epsilon<\epsilon_{c},\ |d_{(1,0)}%
-d_{(\epsilon,1,0)}|<d_{c}.$
\end{itemize}

Examples of $T_{\epsilon}$ are plotted in fig. 2 (see also fig. 3 in
\cite{GMPV}). We remark that these realizations of the perturbed map
$T_{\epsilon}$ occur when a constant perturbing vector field of order
$\epsilon$ is added to (\ref{L1}), just as it has been empirically shown in
the physics literature (see e.g. \cite{CMP}) it turns out to be the case when
considering the effect on the atmospheric circulation of greenhouse gases.
Clearly under these assumptions the map $T_{\epsilon}$ will admit a unique
absolutely continuous invariant measure with density $\rho_{\epsilon}.$ This
density will be related to the invariant density $\hat{\rho}_{\epsilon}$ of
the first return map $F_{\epsilon}$ on $I_{\epsilon}$ by the formula
\begin{equation}
\rho_{\epsilon}(x)=C_{\epsilon,r}\sum_{m=2}^{\infty}\sum_{l=1,2}\frac
{\hat{\rho}(T_{\epsilon,l}^{-1}T_{\epsilon,2}^{-1}T_{\epsilon,1}^{-(m-2)}%
x)}{|DT_{\epsilon}^{m}(T_{\epsilon,l}^{-1}T_{\epsilon,2}^{-1}T_{\epsilon
,1}^{-(m-2)}x)|}%
\end{equation}
for $\lambda$-a.e. $x\in(a_{n}^{\prime},a_{n-1}^{\prime})$ uniformly in
$n\geq1$ (see \cite{GMPV} formula (70)). Moreover, we will have that the
convergence of the perturbed map to the unperturbed one in the $C^{0}$
topology will imply that the density of the absolutely continuous invariant
perturbed measure converges to the density of the unperturbed measure in the
$L_{\lambda}^{1}$ norm.

\begin{figure}[ptbh]
\centering
\resizebox{0.75\textwidth}{!}{\includegraphics{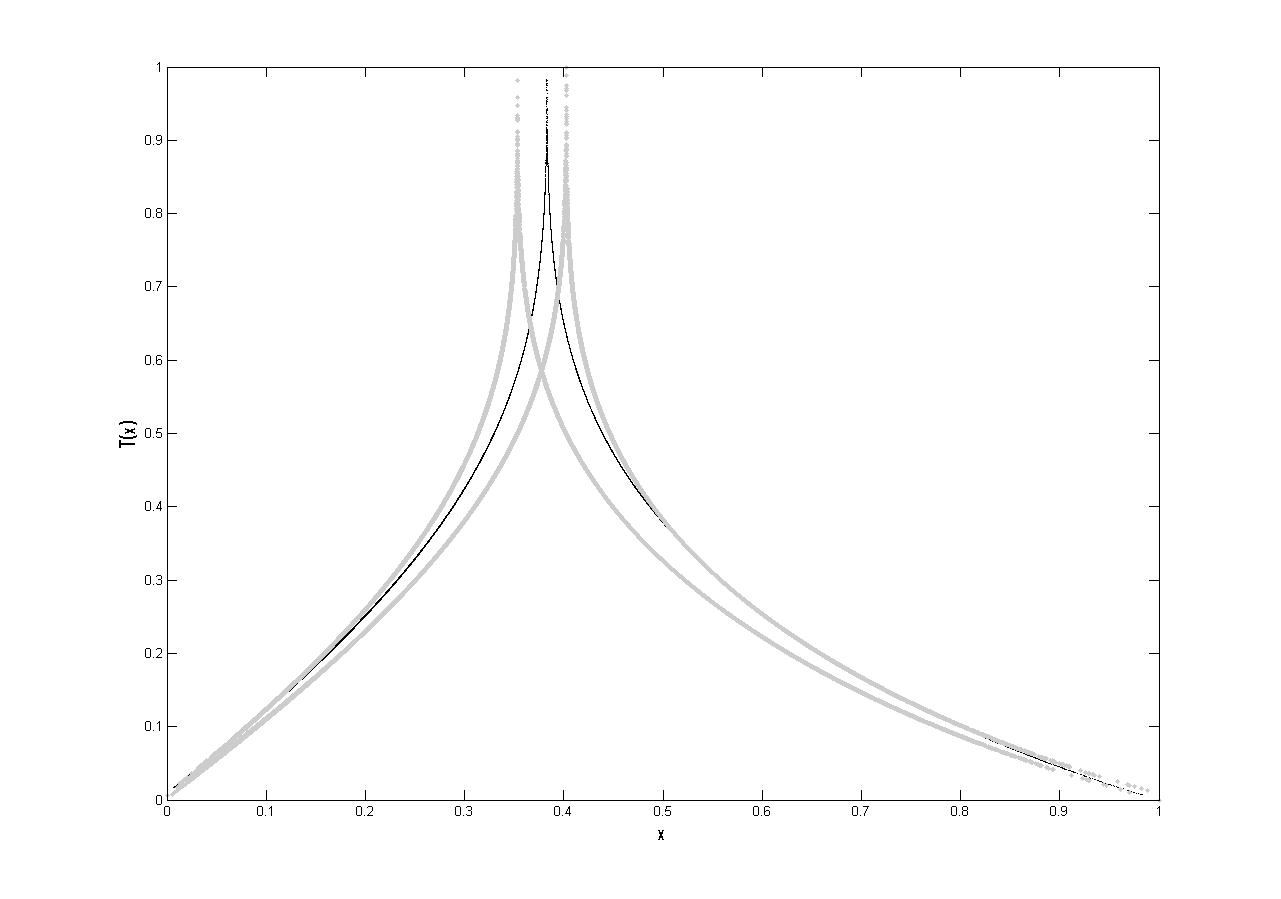}}
\caption{experimental plots of the unperturbed map $T$ (in black)
and of its perturbations (in grey).}%
\label{fig:2}%
\end{figure}

\begin{proposition}%
\begin{equation}
\lim_{\epsilon\rightarrow0^{+}}\left\Vert \rho-\rho_{\epsilon}\right\Vert
_{L_{\lambda}^{1}}=0\,.
\end{equation}

\end{proposition}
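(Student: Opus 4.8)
The plan is to transfer the problem from the full map $T$ to its first–return map on the central interval $I_{\epsilon}$, exactly as in the previous proposition, and then exploit the spectral (quasi-compactness) picture for the associated transfer operators together with Assumptions A--D. First I would recall that, by the construction recalled above, $\rho$ and $\rho_{\epsilon}$ are obtained from the invariant densities $\hat{\rho}$, $\hat{\rho}_{\epsilon}$ of the first–return maps $T_{I}$, $F_{\epsilon}$ through the Pianigiani-type series (the displayed formula for $\rho_{\epsilon}$ and the analogous one for $\rho$). Since each term of the series is a ratio $\hat{\rho}(\cdot)/|DT_{\epsilon}^{m}(\cdot)|$ along inverse branches with uniformly expanding derivative (Assumption D gives $d_{(\epsilon,1,0)}>1$ uniformly for $\epsilon<\epsilon_c$), the series converges geometrically in $m$, uniformly in $\epsilon$; hence it suffices to prove (i) $\hat{\rho}_{\epsilon}\to\hat{\rho}$ in $L^{1}_{\lambda}$, and (ii) that the summation/pushforward operation sending $\hat{\rho}_{\epsilon}$ to $\rho_{\epsilon}$ is continuous in the relevant sense as $\epsilon\to 0$, using Assumption B ($\|T_{\epsilon}-T\|_{0}\to 0$ and convergence of $DT_{\epsilon}$ away from $x_0$, with the ratio $DT_{\epsilon}/DT\to 1$ near $x_0$).

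For step (i) I would set up the Perron--Frobenius (transfer) operators $\mathcal{L}_{\epsilon}$ of the first–return maps acting on the space of functions of bounded variation (or on the Hölder-type space adapted to the distortion bound (\ref{H}) and Assumption C). The distortion estimate proved in the previous proposition — in the perturbed version it is the content of Assumptions C and D, replacing $|D^2T/DT|$ by the bound coming from (\ref{H}) — gives a uniform Lasota--Yorke inequality: there exist $\theta<1$ and $K<\infty$, independent of $\epsilon$ for $\epsilon$ small, with $\mathrm{Var}(\mathcal{L}_{\epsilon}f)\le\theta\,\mathrm{Var}(f)+K\|f\|_{L^1}$. This yields a uniform bound on $\mathrm{Var}(\hat{\rho}_{\epsilon})$ (indeed these densities are Lipschitz and bounded with uniform constants, by the argument of the previous proposition applied to $F_\epsilon$, using Assumption C for the Hölder constants and Assumption D for the scaling of the $b_i$'s), so $\{\hat{\rho}_{\epsilon}\}$ is relatively compact in $L^{1}_{\lambda}$ by Helly's theorem. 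Any $L^{1}$-limit point $\hat{\rho}_{*}$ is a fixed point of the limiting operator $\mathcal{L}_{0}=\mathcal{L}$, because $\|\mathcal{L}_{\epsilon}-\mathcal{L}\|_{L^1(\mathrm{BV}\to L^1)}\to 0$: this is where Assumptions A and B enter, since $C^0$-convergence of $T_{\epsilon}$ controls the locations of the inverse branches and pointwise convergence of $DT_{\epsilon}$ (together with the ratio condition at $x_0$) controls the weights $1/|DT_{\epsilon}|$ in the transfer operator, uniformly except on a vanishing neighbourhood of $x_0$ whose contribution is $O(\epsilon)$ in $L^1$ because $|DT_\epsilon|$ blows up there like a negative power. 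By the spectral gap for $\mathcal{L}$ (the unperturbed first–return map is aperiodic uniformly expanding with bounded distortion, so $1$ is a simple isolated eigenvalue), the fixed point is unique, i.e. $\hat{\rho}_{*}=\hat{\rho}$; since every subsequence has a further subsequence converging to $\hat{\rho}$, the whole family converges, giving (i). This is the standard Keller--Liverani / Baladi perturbation scheme, and I would cite it rather than redo it.

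For step (ii) I would feed (i) into the Pianigiani series. Writing $\rho_{\epsilon}=C_{\epsilon,r}\,\mathcal{P}_{\epsilon}\hat{\rho}_{\epsilon}$ where $\mathcal{P}_{\epsilon}$ is the (bounded, uniformly in $\epsilon$) operator given by the series of pushforwards along the excursion branches, I estimate $\|\rho-\rho_{\epsilon}\|_{L^1}\le |C_r-C_{\epsilon,r}|\,\|\mathcal{P}\hat{\rho}\|_{L^1}+C_{\epsilon,r}\|(\mathcal{P}-\mathcal{P}_{\epsilon})\hat{\rho}\|_{L^1}+C_{\epsilon,r}\|\mathcal{P}_{\epsilon}(\hat{\rho}-\hat{\rho}_{\epsilon})\|_{L^1}$. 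The third term $\to 0$ by (i) and uniform boundedness of $\mathcal{P}_{\epsilon}$; the second term $\to 0$ by dominated convergence applied term-by-term in the series (each excursion branch and its Jacobian converge by Assumption B, the tail is geometrically small uniformly by Assumption D, and the singular-branch contributions near $x_0$ are $O(\epsilon)$); and $C_{\epsilon,r}=\big(\sum_i\tau_i\,\mu_{I,\epsilon}(Z_i)\big)^{-1}\to C_r$ since the return times $\tau_i$ are the same combinatorially and $\mu_{I,\epsilon}(Z_i)\to\mu_I(Z_i)$ with a uniformly summable tail. Combining the three bounds gives $\|\rho-\rho_{\epsilon}\|_{L^1_{\lambda}}\to 0$. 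The main obstacle is the uniform control near the cusp $x_{0}$ (and near the turning points $0,1$): one must show that the perturbation does not create instability in the distortion/Lasota--Yorke constants there despite $DT$ being unbounded, and this is precisely what the ratio condition in Assumption B and the quantitative bound in Assumption D are engineered to secure — so the real work is checking that these assumptions are exactly strong enough to push the contributions of the singular neighbourhoods to zero with $\epsilon$, which I would do via the explicit local forms (\ref{T_DT3})--(\ref{T_DT4}) and the scaling of the $b_i$, $a_i$ sequences as in formula (75) of \cite{GMPV}.
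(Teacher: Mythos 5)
Your overall architecture coincides with the route the paper sketches: induce on the central interval, control the invariant density of the first--return map, and reconstruct $\rho_{\epsilon}$ through the Pianigiani series, with Assumptions A--D supplying uniform expansion and distortion. The paper, however, explicitly identifies the one point where this plan is genuinely hard, and it is precisely the point your proposal passes over: the induced perturbed system lives on $I_{\epsilon}=(a_{\epsilon,0}^{\prime},a_{\epsilon,0})$ with cylinders $Z_{\epsilon,i}$ accumulating at $x_{\epsilon,0}$, while the unperturbed one lives on $I$ with cylinders $Z_{i}$ accumulating at $x_{0}$. These are different sets, so the two Perron--Frobenius operators act on \emph{different functional spaces}, and your key step --- ``any $L^{1}$-limit point of $\hat{\rho}_{\epsilon}$ is a fixed point of $\mathcal{L}$ because $\left\Vert \mathcal{L}_{\epsilon}-\mathcal{L}\right\Vert \rightarrow0$'' --- is not defined as written: there is no common space on which to take that operator norm, and a Keller--Liverani type argument cannot be invoked until one is built. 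The same mismatch silently enters your treatment of the normalizing constants, where you compare $\mu_{I,\epsilon}(Z_{i})$ with $\mu_{I}(Z_{i})$ although the perturbed measure only sees the sets $Z_{\epsilon,i}$.

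To close this you must either conjugate both induced systems to a fixed reference domain, or match $Z_{i}\leftrightarrow Z_{\epsilon,i}$ and estimate the contribution of the symmetric differences $Z_{i}\triangle Z_{\epsilon,i}$. This is not a formality: there are infinitely many such cylinders, they shrink geometrically towards the cusp where $DT$ and $DT_{\epsilon}$ blow up, and the boundary points $a_{p}^{\prime},b_{p},\dots$ move with $\epsilon$ at a rate that must be extracted from Assumptions B--D together with the local forms (\ref{T_DT3})--(\ref{T_DT4}) and the scaling $(x_{0}-b_{p}^{\prime})\sim c(\alpha^{\prime})^{-p/B^{\prime}}$. Summability of these errors over $i$, uniformly in $\epsilon$, is the actual content of the argument in \cite{GMPV} to which the paper defers; your last paragraph gestures at the singular neighbourhoods of $x_{0}$ but does not confront the fact that the whole Markov structure, not just one neighbourhood, is $\epsilon$-dependent. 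The remaining ingredients of your proposal (uniform Lasota--Yorke for the induced maps, compactness in $L^{1}$, reconstruction via the series with geometric tails) are consistent with the intended proof.
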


The proof of this result make use of induction but, in order to preserve the
Markov structure of the first return map, we need to compare the perturbed and
the unperturbed first return maps on different induction subsets. Therefore,
the difficulty in following this approach arises in the comparison of the
Perron-Frobenius operator associated to the induced perturbed system $\left(
I_{\epsilon},T_{I_{\epsilon}}\right)  $ with the Perron-Frobenius operator
associated with the unperturbed one $\left(  I,T_{I}\right)  ,$ which will now
be defined on different functional spaces. We defer the reader to \cite{GMPV}
for the details.

\section{Stochastic stability}

\subsection{Random perturbations}

Random perturbations of the classical Lorenz flow have been studied in the
framework of stochastic differential equations \cite{Sc}, \cite{CSG},
\cite{Ke} (see also \cite{Ar} and reference therein). The main interest of
these studies was bifurcation theory and the existence and the
characterization of the random attractor. The existence of the stationary
measure for this stochastic version of the system of equations given in
(\ref{L1}) is proved in \cite{Ke}.

Stochastic stability under diffusive type perturbations has been studied in
\cite{Ki} for the geometric Lorenz flow and in \cite{Me} for the contracting
Lorenz flow.

In \cite{GV} we introduced a random perturbation of the Lorenz'63 flow which,
being of impulsive nature, differ from diffusion-type perturbations.

\begin{itemize}
\item For any realization of the noise $\eta\in\left[  -\varepsilon
,\varepsilon\right]  ,$ we consider a flow $\left(  \Phi_{\eta}^{t}%
,t\geq0\right)  $ generated by the phase vector field $\phi_{\eta}$ belonging
to a sufficiently small neighborhood of the classical Lorenz one in the
$C^{1}$ topology.

\item For $\varepsilon$ small enough, the realizations of the perturbed phase
vector field $\phi_{\eta}$ can be chosen such that there exists an open
neighborhood $U$ of the unperturbed attractor in $\mathbb{R}^{3},$ independent
of the noise parameter $\eta,$ containing the attractor of any realization of
$\phi_{\eta}$

\item The perturbation acts modifying the phase velocity field of the system
at the ring of a random clock $\mathfrak{t}$
\end{itemize}

This procedure defines a semi-Markov random evolution (sMRE) \cite{KS}, in
fact a piecewise deterministic Markov process (PDMP) \cite{Da}.

To guarantee the existence of an invariant measure for a stochastic process of
this kind its imbedded renewal process must satisfy some minimal requirements.
In particular, when the evolution of the system is started outside $U$ the
trajectories of the system must enter in $U$ with probability one and when the
initial condition belongs to $U$ the expected number of modifications of the
phase vector field in a finite interval of time must be finite. Furthermore,
to make sure that the imbedded Markov chain has a stationary measure, it has
to satisfy some requirement such as for example to admit a Ljapunov function
(see e.g. \cite{MT}). Therefore, we assume that $\varepsilon$ sufficiently
small so that a given Poincar\'{e} section $\mathcal{M}$ for the unperturbed
flow is also transversal to any realization of the perturbed one and allow
changes in the phase velocity field of (\ref{L1}) just at the crossing of
$\mathcal{M}.$ Namely, let $\hat{\tau}_{\eta}:U\rightarrow\mathbb{R}$ and
$\tau_{\eta}:\mathcal{M}\rightarrow\mathbb{R}$ be respectively the hitting
time of $\mathcal{M}$ and the return time map on $\mathcal{M}$ for $\left(
\Phi_{\eta}^{t},t\geq0\right)  .$ If $\eta$ is sampled according to a given
law $\lambda_{\varepsilon}$ supported on $\left[  -\varepsilon,\varepsilon
\right]  ,$ the sequence $\left\{  \mathfrak{x}_{i}\right\}  _{i\geq0}$ such
that $\mathfrak{x}_{0}\in\mathcal{M}$ and, for $i\geq0,\mathfrak{x}%
_{i+1}:=\Phi_{\eta}^{\tau_{\eta}\left(  \mathfrak{x}_{i}\right)  }\left(
\mathfrak{x}_{i}\right)  $ is a homogeneous Markov chain on $\mathcal{M}$ with
transition probability measure
\begin{equation}
\mathbb{P}\left\{  \mathfrak{x}_{1}\in dz|\mathfrak{x}_{0}\right\}
=\lambda_{\varepsilon}\left\{  \eta\in\left[  -1,1\right]  :R_{\eta}\left(
\mathfrak{x}_{0}\right)  \in dz\right\}  \ .
\end{equation}
Considering the collection of sequences of i.i.d.r.v's $\left\{  \eta
_{i}\right\}  _{i\geq0}$ distributed according to $\lambda_{\varepsilon},$ we
define the random sequence $\left\{  \sigma_{n}\right\}  _{n\geq1}%
\in\mathbb{R}^{\mathbb{N}}$ such that $\sigma_{n}:=\sum_{i=0}^{n-1}\tau
_{\eta_{i-1}}\left(  \mathfrak{x}_{i-1}\right)  ,n\geq1.$ Then, it is easily
checked that the sequence $\left\{  \left(  \mathfrak{x}_{n},\mathbf{t}%
_{n}\right)  \right\}  _{n\geq0}$ such that $\mathbf{t}_{0}:=\sigma_{1}$ and,
for $n\geq0,\mathbf{t}_{n}:=\sigma_{n+1}-\sigma_{n}$ is a Markov renewal
process (MRP) \cite{As}, \cite{KS}. Therefore, denoting by $\left(
\mathbf{N}_{t},t\geq0\right)  ,$ such that $\mathbf{N}_{0}:=0$ and
$\mathbf{N}_{t}:=\sum_{n\geq0}\mathbf{1}_{\left[  0,t\right]  }\left(
\sigma_{n}\right)  ,$ the associated counting process and defining:

\begin{itemize}
\item $\left(  \mathfrak{x}_{t},t\geq0\right)  ,$ such that $\mathfrak{x}%
_{t}:=\mathfrak{x}_{\mathbf{N}_{t}},$ the associated semi-Markov process;

\item $\left(  \mathfrak{l}_{t},t\geq0\right)  ,$ such that $\mathfrak{l}%
_{t}:=t-\sigma_{\mathbf{N}_{t}},$ the \emph{age} (\emph{residual life}) of the MRP;

\item $\left(  \eta_{t},t\geq0\right)  $ such that $\eta_{t}:=\eta
_{\mathbf{N}_{t}},$
\end{itemize}

setting $\sigma_{0}:=\hat{\tau}_{\eta},$ we introduce the random process
$\left(  \mathfrak{u}_{t},t\geq0\right)  ,$ such that
\begin{equation}
\mathfrak{u}_{t}\left(  y_{0}\right)  :=\left\{
\begin{array}
[c]{ll}%
\Phi_{\eta}^{t}\left(  y_{0}\right)  \mathbf{1}_{[0,\sigma_{0}\left(
y_{0}\right)  )}\left(  t\right)  +\mathbf{1}_{\left\{  \Phi_{\eta}%
^{\sigma_{0}\left(  y_{0}\right)  }\left(  y_{0}\right)  \right\}  }\left(
\mathfrak{x}_{0}\right)  \Phi_{\eta_{\mathbf{N}_{t-\sigma_{0}\left(
y_{0}\right)  }}}^{\mathfrak{l}_{t-\sigma_{0}\left(  y_{0}\right)  }}%
\circ\mathfrak{x}_{t-\sigma_{0}\left(  y_{0}\right)  } & y_{0}\in
U\backslash\mathcal{M}\\
\mathbf{1}_{\left\{  y_{0}\right\}  }\left(  \mathfrak{x}_{0}\right)
\Phi_{\eta_{\mathbf{N}_{t}}}^{\mathfrak{l}_{t}}\circ\mathfrak{x}_{t} &
y_{0}\in\mathcal{M}%
\end{array}
\right.  \ , \label{u_t}%
\end{equation}
describes the system evolution started at $y_{0}\in U$ (Fig.3).

\begin{figure}[ptbh]
\centering
\resizebox{0.75\textwidth}{!}{\includegraphics{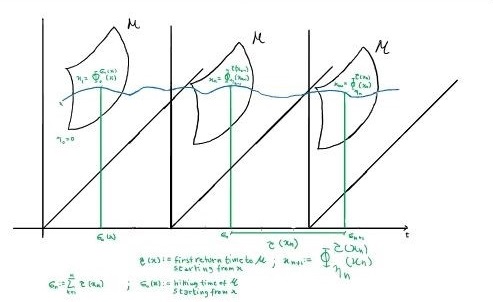}
}
\caption{perturbation scheme}%
\label{fig:3}%
\end{figure}

We prove:

\begin{theorem}
\label{main}There exists a measure $\mu_{\varepsilon}$ on the measurable space
$\left(  U,\mathcal{B}\left(  U\right)  \right)  ,$ with $\mathcal{B}\left(
U\right)  $ the trace $\sigma$algebra of the Borel $\sigma$algebra of
$\mathbb{R}^{3},$ such that, for any bounded real-valued measurable function
$f$ on $U,$%
\begin{equation}
\lim_{T\rightarrow\infty}\frac{1}{T}\int_{0}^{T}dtf\circ\mathfrak{u}_{t}%
=\mu_{\varepsilon}\left(  f\right)
\end{equation}
and
\begin{equation}
\lim_{\varepsilon\downarrow0}\mu_{\varepsilon}\left(  f\right)  =\mu
_{0}\left(  f\right)  \ ,
\end{equation}
where $\mu_{0}$ is the physical measure of the classical Lorenz flow.
\end{theorem}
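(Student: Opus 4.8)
The plan is to establish the two assertions separately: first the existence of the time-average limit defining $\mu_{\varepsilon}$, then the weak-$*$ convergence $\mu_{\varepsilon}\to\mu_{0}$ as $\varepsilon\downarrow0$. For the first part I would exploit the structure of $\left(\mathfrak{u}_{t},t\geq0\right)$ as a piecewise deterministic Markov process (PDMP) whose embedded Markov chain is $\left\{\mathfrak{x}_{n}\right\}_{n\geq0}$ on the Poincar\'e section $\mathcal{M}$. First I would verify the minimal recurrence requirements already flagged in the text: when started outside $U$ the flow $\left(\Phi_{\eta}^{t},t\geq0\right)$ enters $U$ in finite time with probability one (using transversality of $\mathcal{M}$ to every realization $\phi_{\eta}$ and the fact that $U$ is a common trapping region), and when started in $U$ the expected number of crossings of $\mathcal{M}$ in any finite time window is finite (so $\mathbf{N}_{t}<\infty$ a.s. and $\mathbb{E}\,\mathbf{N}_{t}<\infty$). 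Next I would construct a Lyapunov function for the embedded chain $\left\{\mathfrak{x}_{n}\right\}$ — most naturally built from the Casimir function $C$ or the distance to the unperturbed attractor — to invoke a Harris-type criterion (e.g. the results in \cite{MT}) and conclude that $\left\{\mathfrak{x}_{n}\right\}$ has a unique stationary probability measure $\nu_{\varepsilon}$ on $\mathcal{M}$ and is ergodic. The stationary measure $\mu_{\varepsilon}$ for the continuous-time process is then obtained by the standard Markov-renewal formula, integrating the deterministic flow $\Phi_{\eta}^{s}$ over one inter-crossing interval against $\nu_{\varepsilon}$ and the clock law, normalized by the mean return time $\bar{\tau}_{\varepsilon}:=\int_{\mathcal{M}}\mathbb{E}_{\eta}\,\tau_{\eta}\,d\nu_{\varepsilon}$; the ergodic theorem for Markov renewal / semi-Markov processes then yields $\frac{1}{T}\int_{0}^{T}f\circ\mathfrak{u}_{t}\,dt\to\mu_{\varepsilon}(f)$ for all bounded measurable $f$, for a.e. realization of the noise and every starting point $y_{0}\in U$.

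For the convergence statement, I would set up the comparison between the perturbed embedded dynamics and the unperturbed Poincar\'e return dynamics on $\mathcal{M}$. As $\varepsilon\downarrow0$, the transition kernel $\mathbb{P}\left\{\mathfrak{x}_{1}\in dz\mid\mathfrak{x}_{0}\right\}=\lambda_{\varepsilon}\left\{\eta:R_{\eta}\left(\mathfrak{x}_{0}\right)\in dz\right\}$ converges — because $\phi_{\eta}\to\phi_{0}$ in $C^{1}$ uniformly in $\eta\in[-\varepsilon,\varepsilon]$ and $\lambda_{\varepsilon}$ concentrates at $0$ — to the deterministic kernel $\delta_{R_{0}\left(\mathfrak{x}_{0}\right)}$ of the unperturbed return map, in a topology strong enough (Feller, with uniform control near the cusp line) to pass to the limit in the stationary equation $\nu_{\varepsilon}=\nu_{\varepsilon}P_{\varepsilon}$. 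The crucial reduction is that the unperturbed embedded dynamics projects, via the stable foliation and then the identification of leaves with equal Casimir value, onto the one-dimensional Lorenz-like cusp map $T$ of Section~\ref{statstab}; the unperturbed physical measure $\mu_{0}$ disintegrates over the $T$-invariant measure $\mu$, whose density $\rho$ is Lipschitz and vanishes at the endpoints by the first Proposition. I would therefore push the measures $\nu_{\varepsilon}$ forward to the interval, show the resulting one-dimensional stationary measures converge weakly to $\mu$ — here the perturbed interval maps $T_{\varepsilon}$ satisfy Assumptions A--D and the second Proposition gives $\left\Vert\rho-\rho_{\varepsilon}\right\Vert_{L^{1}_{\lambda}}\to0$, which is more than enough — and then lift back: since the flow time over one return interval, $\tau_{\eta}$, and the mean return time $\bar{\tau}_{\varepsilon}$ depend continuously on $\eta$ and converge to their unperturbed counterparts, the disintegration formula for $\mu_{\varepsilon}$ converges term by term to that for $\mu_{0}$, giving $\mu_{\varepsilon}(f)\to\mu_{0}(f)$ for bounded measurable $f$.

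The main obstacle, I expect, is uniform control of the embedded chain near the cusp — the preimage of the discontinuity line $x_{0}$ of $T$ on $\mathcal{M}$ — where the return map has unbounded derivative (the exponents $0<B,B^{\prime}<1$ in \eqref{T_DT3}--\eqref{T_DT4}) and where the stable foliation is only $C^{1+\alpha}$; one must ensure that the Lyapunov/recurrence estimates and the kernel-convergence are uniform in $\varepsilon$ across this region, and that no mass of $\nu_{\varepsilon}$ escapes to the cusp in the limit. This is precisely the point where Assumptions C and D (uniform H\"older constants, uniform expansion $d_{(\epsilon,1,0)}>1$) and the vanishing of $\rho$ at the endpoints are used. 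A secondary technical point is that $f$ is only assumed bounded measurable, so the convergence $\mu_{\varepsilon}(f)\to\mu_{0}(f)$ cannot rely on Portmanteau for continuous test functions alone; it must come through the $L^{1}$-convergence of the disintegrated densities together with continuity of the return-time data, which is why reducing to the interval dynamics — where we already have norm convergence of densities — is the right route. I defer the detailed estimates to \cite{GV}.
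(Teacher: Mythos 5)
Your overall architecture matches the paper's: embedded Markov chain $\left\{\mathfrak{x}_{n}\right\}_{n\geq0}$ on $\mathcal{M}$, a Markov-renewal/ergodic-theorem argument producing $\mu_{\varepsilon}(f)$ as a ratio of integrals against the stationary measure $\nu_{2}^{\varepsilon}$ normalized by the mean return time, and then reduction to the one-dimensional cusp map to prove $\nu_{2}^{\varepsilon}\rightarrow\mu_{R_{0}}$ and lift back through the suspension. However, there are two genuine gaps in how you propose to execute the two key steps.

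First, existence \emph{and uniqueness} of $\nu_{2}^{\varepsilon}$. You propose a Lyapunov function built from the Casimir $C$ together with ``a Harris-type criterion from \cite{MT}.'' A drift condition such as (\ref{wd}) yields, via the Doeblin--Fortet inequalities and tightness, only \emph{existence} of an invariant measure (this is exactly the paper's Lemma), not uniqueness or ergodicity: for that you need an irreducibility/minorization ingredient, which you never supply. In the paper this ingredient does not come from the chain on $\mathcal{M}$ directly but from the one-dimensional quotient dynamics: assumptions A5--A6 (the transition kernel of the randomly perturbed interval map has a density supported on $B_{\varepsilon}(Tx)$) give, via \cite{BHV} and \cite{H-LL}, uniqueness of the absolutely continuous stationary measure $\nu_{1}^{\varepsilon}$, which is then lifted along the stable foliation to $\nu_{2}^{\varepsilon}$ (Steps 1--3). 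Moreover, the Casimir Lyapunov estimate is only established (via \cite{PP}) for the \emph{additive} perturbation $\phi_{\eta}=\phi_{0}+\eta H$; for a general $C^{1}$-small perturbation $\phi_{\eta}$ you have no such inequality, and the paper explicitly treats the additive case as a special shortcut, handling the general case entirely through the RDS representation. Note also the paper's Remark: Harris recurrence is not actually needed for the Steps 1--8 proof.

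Second, the convergence $\nu_{2}^{\varepsilon}\rightarrow\mu_{R_{0}}$. You propose to conclude from the statistical stability Proposition of Section \ref{statstab} (Assumptions A--D, $\left\Vert\rho-\rho_{\epsilon}\right\Vert_{L_{\lambda}^{1}}\rightarrow0$). That result concerns a \emph{single deterministic} perturbed map $T_{\epsilon}$ and the convergence of \emph{its} invariant density; what Theorem \ref{main} requires is convergence of the stationary density $h_{\varepsilon}$ of the \emph{random composition} $\bar{T}_{\eta_{k}}\circ\cdots\circ\bar{T}_{\eta_{0}}$, i.e.\ the fixed point of the averaged transfer operator $\mathcal{L}_{\varepsilon}=\int\lambda_{\varepsilon}(d\eta)\,\mathcal{L}_{\eta}$. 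These are different statements with different proofs: the paper proves the latter as Theorem \ref{SSST}, via uniform Lasota--Yorke inequalities on the quasi-H\"older space, the closeness estimate $|||\mathcal{L}-\mathcal{L}_{\varepsilon}|||\leq\upsilon^{\prime}(\varepsilon)$ under the additional assumptions A7--A8 (vertical closeness of derivatives, translational similarity of branches), and the Keller--Liverani perturbation theorem \cite{KL}. Your instinct that $L^{1}$ convergence of densities is what handles bounded measurable test functions is correct, but you are citing the wrong stability result to obtain it. Finally, in the lifting step you treat the return times as ``continuous and convergent''; since $\tau_{\eta}$ blows up on the local stable manifold of the fixed point, one needs the uniform $\mu_{\mathbf{R}}^{\varepsilon}$-summability of the random roof function (Step 7) rather than continuity, a point worth flagging explicitly.
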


In other words, we show that we can recover the physical measure of the
unperturbed flow as weak limit, as the intensity of the perturbation vanishes,
of the measure on the phase space of the system obtained by looking at the law
of large numbers for cumulative processes defined as the integral over
$\left[  0,t\right]  $ of functionals on the path space of the stationary
process representing the perturbed system's dynamics. Therefore, we will
reduce ourselves to prove that the imbedded Markov chain driving the random
process that describes the evolution of the system is stationary, that its
stationary (invariant) measure is unique and that it will converge weakly to
the invariant measure of the unperturbed Poincar\'{e} map corresponding to
$\mathcal{M}.$ To prove existence and uniqueness of the stationary initial
distribution of a Markov chain with uncountable state space is not an easy
task in general (we refer the reader to \cite{MT} for an account on this
subject). To overcome this difficulty we can take advantage of the
representation of the Markov chain $\left\{  \mathfrak{x}_{n}\right\}
_{n\geq0}$ as a Random Dynamical System (RDS) and consequently make use of the
skew-product structure of the first return maps $R_{\eta}.$ Furthermore, we
can also show that the trajectories of the PDMP (\ref{u_t}) are conjugated to
those of a suspension semi-flow over a RDS defined on $\left\{  \mathfrak{x}%
_{n}\right\}  _{n\geq0}$ with roof function defined in terms of the
realizations of the r.v. describing the return time on $\mathcal{M}.$ This led
us to give a proof of Theorem \ref{main} directly in the framework of the
theory of dynamical systems.

However, if the perturbation of the phase velocity field in (\ref{L1}) is
given by the addition to the unperturbed one $\phi_{0}$ of a small constant
term, namely $\phi_{\eta}:=\phi_{0}+\eta H,H\in\mathbb{S}^{2},$ the proof of
the existence of an invariant measure for the unperturbed Poincar\'{e} map
will follow a more direct strategy.

\subsubsection{Proof of Theorem \ref{main}}

The process $\left(  \mathfrak{v}_{t},t\geq0\right)  $ such that
$\mathfrak{v}_{t}:=\left(  \mathfrak{x}_{t},\mathbf{N}_{t},\mathfrak{l}%
_{t}\right)  $ is a homogeneous Markov process and so is the process $\left(
\mathfrak{w}_{t},t\geq0\right)  $ such that $\mathfrak{w}_{t}:=\left(
\mathfrak{x}_{t},\mathfrak{l}_{t}\right)  .$ Moreover $\overline{\mathcal{F}%
}_{t}^{\mathfrak{w}}\subseteq\overline{\mathcal{F}}_{t}^{\mathfrak{v}}$ and it
follows from \cite{Da} Theorem A2.2 that these $\sigma$algebras are both right continuous.

Assume for the moment that the Markov chain $\left\{  \mathfrak{x}%
_{n}\right\}  _{n\in\mathbb{Z}^{+}}$ is Harris recurrent (see e.g. \cite{MT}
or \cite{H-LL}). By formula (3.9) in \cite{Al} Corollary 1, (see also
\cite{Al} Theorem 3) we have that for any $x\in\mathcal{M},v\geq0$ and any
measurable set $A\subseteq\mathcal{M},$%
\begin{equation}
\lim_{t\rightarrow\infty}\mathbb{P}\left\{  \mathfrak{x}_{t}\in A,\mathfrak{l}%
_{t}>z|\mathfrak{x}_{0}=x,\mathfrak{l}_{0}=v\right\}  =\frac{\int
_{\mathcal{M}}\nu_{2}^{\varepsilon}\left(  dx\right)  \left[  \mathbf{1}%
_{A}\left(  x\right)  \int_{z}^{\infty}ds\left(  1-F_{\tau}^{\varepsilon
}\left(  s;x\right)  \right)  \right]  }{\int_{\mathcal{M}}\nu_{2}%
^{\varepsilon}\left(  dx\right)  \left[  \int_{0}^{\infty}ds\left(  1-F_{\tau
}^{\varepsilon}\left(  s;x\right)  \right)  \right]  }\;,\;\mathbb{P}%
\text{-a.s.\ ,}%
\end{equation}
where for any $x\in\mathcal{M},t\geq0,$%
\begin{equation}
F_{\tau}^{\varepsilon}\left(  t;x\right)  :=\mathbb{P}\left\{  \omega\in
\Omega:\mathbf{t}\left(  x,\omega\right)  \leq t\right\}  =\lambda
_{\varepsilon}\left\{  \eta\in\left[  -1,1\right]  :\tau_{\eta}\left(
x\right)  \leq t\right\}
\end{equation}
and $\nu_{2}^{\varepsilon}$ is the stationary measure for $\left\{
\mathfrak{x}_{n}\right\}  _{n\in\mathbb{Z}^{+}}.$ The following result, the
proof of which is deferred to \cite{GV}, defines the physical measure of
(\ref{u_t}).

\begin{proposition}
For any bounded measurable function $f$ on $U$ and any $y_{0}\in U,$%
\begin{equation}
\lim_{t\rightarrow\infty}\frac{1}{t}\int_{0}^{t}dsf\circ\mathfrak{u}%
_{s}\left(  y_{0}\right)  =\frac{\int_{\left[  -1,1\right]  }\lambda
_{\varepsilon}\left(  d\eta\right)  \int_{\mathcal{M}}\nu_{2}^{\varepsilon
}\left(  dx\right)  \int_{0}^{\tau_{\eta}\left(  x\right)  }dsf\left(
\Phi_{\eta}^{s}\left(  x\right)  \right)  }{\int_{\mathcal{M}}\nu
_{2}^{\varepsilon}\left(  dx\right)  \left[  \int_{0}^{\infty}ds\left(
1-F_{\tau}^{\varepsilon}\left(  s;x\right)  \right)  \right]  }\;,\;\mathbb{P}%
\text{-a.s.}%
\end{equation}

\end{proposition}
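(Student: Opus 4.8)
The plan is to read the left-hand side as a renewal--reward (ratio ergodic) limit for the semi-Markov process attached to $\left(\mathfrak{u}_{t},t\geq0\right)$, and to reduce it, through the Markov renewal process $\left\{\left(\mathfrak{x}_{n},\mathbf{t}_{n}\right)\right\}_{n\geq0}$, to the strong law of large numbers for the Harris recurrent chain $\left\{\mathfrak{x}_{n}\right\}_{n\geq0}$. Fix $M:=\sup_{U}\left\vert f\right\vert <\infty$. First I would remove the transient piece: for $y_{0}\in U\backslash\mathcal{M}$ the trajectory equals $\Phi_{\eta}^{t}\left(y_{0}\right)$ on $\left[0,\sigma_{0}\left(y_{0}\right)\right)$, and since $\mathcal{M}$ is a Poincar\'{e} section for every realization of the perturbed flow inside the common trapping region $U$, the hitting time $\sigma_{0}\left(y_{0}\right)=\hat{\tau}_{\eta}\left(y_{0}\right)$ is $\mathbb{P}$-a.s. finite; as $\frac{1}{t}\int_{0}^{\sigma_{0}}ds\left\vert f\circ\mathfrak{u}_{s}\right\vert \leq M\sigma_{0}/t\to0$, it suffices to treat $y_{0}=\mathfrak{x}_{0}\in\mathcal{M}$. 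On $\left[\sigma_{n},\sigma_{n+1}\right)$ one has $\mathfrak{u}_{s}=\Phi_{\eta_{n}}^{s-\sigma_{n}}\left(\mathfrak{x}_{n}\right)$, whence the cycle reward
\[
\int_{\sigma_{n}}^{\sigma_{n+1}}ds\,f\circ\mathfrak{u}_{s}=\int_{0}^{\mathbf{t}_{n}}du\,f\left(\Phi_{\eta_{n}}^{u}\left(\mathfrak{x}_{n}\right)\right)=:g\left(\mathfrak{x}_{n},\eta_{n}\right),\qquad\mathbf{t}_{n}=\tau_{\eta_{n}}\left(\mathfrak{x}_{n}\right),\qquad\left\vert g\left(x,\eta\right)\right\vert \leq M\,\tau_{\eta}\left(x\right).
\]

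Next I would run the renewal--reward argument. The pair process $\left\{\left(\mathfrak{x}_{n},\eta_{n}\right)\right\}_{n\geq0}$ is a homogeneous Markov chain whose $\mathfrak{x}$-marginal is $\left\{\mathfrak{x}_{n}\right\}_{n\geq0}$; it is therefore Harris recurrent, under the hypothesis recalled above, with stationary probability $\nu_{2}^{\varepsilon}\otimes\lambda_{\varepsilon}$, the $\eta_{n}$ being i.i.d. $\sim\lambda_{\varepsilon}$ and, given $\mathfrak{x}_{n}$, independent of the past. Because the imbedded renewal process was built so that the mean return time to $\mathcal{M}$ is finite,
\[
\int_{\mathcal{M}}\nu_{2}^{\varepsilon}\left(dx\right)\int_{\left[-1,1\right]}\lambda_{\varepsilon}\left(d\eta\right)\tau_{\eta}\left(x\right)=\int_{\mathcal{M}}\nu_{2}^{\varepsilon}\left(dx\right)\int_{0}^{\infty}ds\left(1-F_{\tau}^{\varepsilon}\left(s;x\right)\right)<\infty,
\]
so $g$ and $\left(x,\eta\right)\mapsto\tau_{\eta}\left(x\right)$ are $\nu_{2}^{\varepsilon}\otimes\lambda_{\varepsilon}$-integrable, and the ergodic theorem for positive Harris chains (see \cite{MT}) gives, $\mathbb{P}$-a.s. for \emph{every} starting $\mathfrak{x}_{0}\in\mathcal{M}$,
\[
\frac{1}{N}\sum_{n=0}^{N-1}g\left(\mathfrak{x}_{n},\eta_{n}\right)\longrightarrow\mathcal{N}:=\int_{\left[-1,1\right]}\lambda_{\varepsilon}\left(d\eta\right)\int_{\mathcal{M}}\nu_{2}^{\varepsilon}\left(dx\right)\int_{0}^{\tau_{\eta}\left(x\right)}ds\,f\left(\Phi_{\eta}^{s}\left(x\right)\right)
\]
and
\[
\frac{\sigma_{N}}{N}=\frac{1}{N}\sum_{n=0}^{N-1}\mathbf{t}_{n}\longrightarrow\mathcal{D}:=\int_{\mathcal{M}}\nu_{2}^{\varepsilon}\left(dx\right)\int_{0}^{\infty}ds\left(1-F_{\tau}^{\varepsilon}\left(s;x\right)\right)\in\left(0,\infty\right).
\]

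Then I would interpolate to continuous time. From $\sigma_{\mathbf{N}_{t}}\leq t<\sigma_{\mathbf{N}_{t}+1}$, together with $\mathbf{N}_{t}\to\infty$ a.s. (again Harris recurrence) and $\mathbf{t}_{n}/n\to0$ a.s. (a consequence of $\sigma_{n}/n\to\mathcal{D}$), one gets $\sigma_{\mathbf{N}_{t}}/t\to1$, hence $\mathbf{N}_{t}/t\to\mathcal{D}^{-1}$; moreover the incomplete last cycle contributes at most $M\,\mathbf{t}_{\mathbf{N}_{t}}/t\to0$. Writing
\[
\frac{1}{t}\int_{0}^{t}ds\,f\circ\mathfrak{u}_{s}=\frac{\mathbf{N}_{t}}{t}\cdot\frac{1}{\mathbf{N}_{t}}\sum_{n=0}^{\mathbf{N}_{t}-1}g\left(\mathfrak{x}_{n},\eta_{n}\right)+o\left(1\right)
\]
and inserting the two discrete limits yields $\mathcal{N}/\mathcal{D}$, which is the asserted expression. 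Equivalently, since on the stationary part $\mathfrak{u}_{s}=\Phi_{\eta_{s}}^{\mathfrak{l}_{s}}\circ\mathfrak{x}_{s}$, the same conclusion follows by applying the semi-Markov ratio ergodic theorem of \cite{Al} (Theorem 3 and Corollary 1), whose limiting law for $\left(\mathfrak{x}_{t},\mathfrak{l}_{t}\right)$ is the one displayed just above, to the bounded functional $\left(x,\eta,v\right)\mapsto f\left(\Phi_{\eta}^{v}\left(x\right)\right)$ of the Markov process $\left(\mathfrak{x}_{s},\eta_{s},\mathfrak{l}_{s}\right)$.

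The main obstacle — and the reason one must work with Harris recurrence rather than with mere stationarity plus ergodicity — is that the statement must hold $\mathbb{P}$-a.s. for \emph{every} $y_{0}\in U$, not only for $\nu_{2}^{\varepsilon}$-a.e. initial condition; the two accompanying technical points are the integrability of $g$ and of $\tau_{\cdot}\left(\cdot\right)$ against $\nu_{2}^{\varepsilon}\otimes\lambda_{\varepsilon}$, which is exactly the finiteness of the mean return time to $\mathcal{M}$ built into the perturbation scheme, and the a.s. finiteness of $\hat{\tau}_{\eta}$ on $U\backslash\mathcal{M}$, which follows from $\mathcal{M}$ being a uniform Poincar\'{e} section inside the trapping region $U$. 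The complete argument is carried out in \cite{GV}.
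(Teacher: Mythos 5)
Your argument is correct and follows essentially the route the paper indicates: the paper defers the detailed proof to \cite{GV}, but its setup --- Harris recurrence of $\left\{ \mathfrak{x}_{n}\right\}$, the Markov renewal structure of $\left\{ \left( \mathfrak{x}_{n},\mathbf{t}_{n}\right) \right\}$, and the limit law from \cite{Al} for $\left( \mathfrak{x}_{t},\mathfrak{l}_{t}\right)$ --- is exactly the semi-Markov/renewal--reward machinery you deploy, with your cycle decomposition plus the SLLN for positive Harris chains being the standard mechanism behind it. The one point to keep honest is your closing ``equivalently'' remark: the Markov renewal theorem of \cite{Al} as quoted gives convergence of distributions (expectations), so the $\mathbb{P}$-a.s. statement really does rest on your primary ratio-ergodic/renewal--reward argument rather than on that limit law alone.
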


Therefore, defining
\begin{equation}
\mu_{\varepsilon}\left(  f\right)  :=\frac{\int_{\left[  -1,1\right]  }%
\lambda_{\varepsilon}\left(  d\eta\right)  \int_{\mathcal{M}}\nu
_{2}^{\varepsilon}\left(  dx\right)  \int_{0}^{\tau_{\eta}\left(  x\right)
}ds}{\int\nu_{2}^{\varepsilon}\left(  dx\right)  \left[  \int_{0}^{\infty
}ds\left(  1-F_{\tau}^{\varepsilon}\left(  s;x\right)  \right)  \right]
}f\circ\Phi_{\eta}^{s}\left(  x\right)  \ ,
\end{equation}
assuming the stochastic stability of the invariant measure $\mu_{R_{0}}$ for
the unperturbed Poincar\'{e} map $R_{0},$ namely the weak convergence of
$\nu_{2}^{\varepsilon}$ to $\mu_{R_{0}}$ as $\varepsilon\downarrow0,$ since
for any bounded real-valued measurable function $\varphi$ on $\mathcal{M}%
\times\mathbb{R}^{+},$%
\begin{gather}
\lim_{\varepsilon\rightarrow0}\frac{1}{\nu_{2}^{\varepsilon}\left[  \int
_{0}^{\infty}ds\left(  1-F_{\tau}^{\varepsilon}\left(  s;\cdot\right)
\right)  \right]  }\int_{\mathcal{M}}\nu_{2}^{\varepsilon}\left(  dx\right)
\int_{0}^{\tau_{\eta}\left(  x\right)  }ds\varphi\left(  x,s\right)  =\\
=\int_{\mathcal{M}}\mu_{R_{0}}\left(  dx\right)  \int_{0}^{\tau_{0}\left(
x\right)  }ds\frac{1}{\mu_{R_{0}}\left[  \tau_{0}\right]  }\varphi\left(
x,s\right)  =\mu_{S_{0}}\left(  \varphi\right)  \ ,\nonumber
\end{gather}
where $S_{0}$ is the suspension flow over $R_{0}$ with roof function $\tau
_{0},$ we get
\begin{equation}
\lim_{\varepsilon\rightarrow0}\mu_{\varepsilon}\left(  f\right)  =\mu_{S_{0}%
}\left(  f\circ\Phi_{0}^{\cdot}\right)  =\int_{\mathcal{M}}\mu_{R_{0}}\left(
dx\right)  \int_{0}^{\tau_{0}\left(  x\right)  }ds\frac{1}{\mu_{R_{0}}\left[
\tau_{0}\right]  }f\circ\Phi_{0}^{s}\left(  x\right)  \ ,
\end{equation}
that is the proof of the following result.

\begin{theorem}
If $\nu_{2}^{\varepsilon}$ weakly converges to $\mu_{R_{0}},$ then
$\mu_{\varepsilon}$ weakly converges to the unperturbed physical measure.
\end{theorem}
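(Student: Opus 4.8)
The displayed chain of equalities above already carries the computation; the plan is to supply its justifications and then read off the conclusion. Fix a bounded measurable $f$ on $U$. I would first rewrite $\mu_\varepsilon(f)$ purely in terms of return data on $\mathcal{M}$: since $F_\tau^\varepsilon(s;x)=\lambda_\varepsilon\{\eta:\tau_\eta(x)\le s\}$, Tonelli's theorem gives
\[
\int_0^\infty ds\,\bigl(1-F_\tau^\varepsilon(s;x)\bigr)=\int_{[-1,1]}\lambda_\varepsilon(d\eta)\,\tau_\eta(x)=:\bar\tau_\varepsilon(x)\ ,
\]
so that $\mu_\varepsilon(f)=\nu_2^\varepsilon[g_\varepsilon]/\nu_2^\varepsilon[\bar\tau_\varepsilon]$ with $g_\varepsilon(x):=\int_{[-1,1]}\lambda_\varepsilon(d\eta)\int_0^{\tau_\eta(x)}ds\,f\circ\Phi_\eta^s(x)$: a ratio of two $\nu_2^\varepsilon$-integrals of $\varepsilon$-dependent functions on $\mathcal{M}$.

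Next I would establish the relevant pointwise limits on $\mathcal{M}$. Let $\Gamma\subset\mathcal{M}$ be the codimension-one set — contained in the stable manifold of $c_0$ — along which the unperturbed return time $\tau_0$ is discontinuous and diverges, of order $|\log\mathrm{dist}(\cdot,\Gamma)|$. On $\mathcal{M}\setminus\Gamma$, the transversality of $\mathcal{M}$ to every realization $\Phi_\eta$, continuous dependence of ODE solutions on parameters, and the implicit function theorem give $\tau_\eta(x)\to\tau_0(x)$ and $\Phi_\eta^s(x)\to\Phi_0^s(x)$ (uniformly for $s$ in compacts, uniformly in $|\eta|\le\varepsilon$) as $\varepsilon\downarrow0$, hence $\bar\tau_\varepsilon(x)\to\tau_0(x)$ and, $f$ being approximable by continuous functions in the relevant sense, $g_\varepsilon(x)\to g_0(x):=\int_0^{\tau_0(x)}ds\,f\circ\Phi_0^s(x)$ there.

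Then I would pass to the limit under the outer $\nu_2^\varepsilon$-integral, combining these pointwise convergences with the hypothesis $\nu_2^\varepsilon\to\mu_{R_0}$. Since $\mu_{R_0}$ has a bounded density on the leaf space (the Proposition proved above), $\Gamma$ is $\mu_{R_0}$-null, so the discontinuities along $\Gamma$ are immaterial. The genuine difficulty is the unboundedness of $\tau_0$: I would truncate the return time at a level $K$ inside all the integrals, pass to the limit with the resulting bounded integrands, and then remove the truncation by letting $K\to\infty$, using uniform integrability of $\{\bar\tau_\varepsilon\}$ against $\{\nu_2^\varepsilon\}$ — a consequence of the logarithmic bound on $\tau_\eta$ near $\Gamma$, uniform in small $\eta$, and a uniform-in-$\varepsilon$ upper bound on the densities of $\nu_2^\varepsilon$. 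This gives
\[
\lim_{\varepsilon\downarrow0}\mu_\varepsilon(f)=\frac{1}{\mu_{R_0}[\tau_0]}\int_{\mathcal{M}}\mu_{R_0}(dx)\int_0^{\tau_0(x)}ds\,f\circ\Phi_0^s(x)\ ,
\]
with $\mu_{R_0}[\tau_0]$ finite (integrability of the logarithmically divergent $\tau_0$ against the bounded density) and strictly positive (transversality keeps $\tau_0$ bounded away from $0$).

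Finally I would identify the right-hand side with $\mu_0(f)$. The suspension semiflow $S_0$ over the Poincar\'{e} return map $R_0$ with roof $\tau_0$ carries the normalized invariant measure $\mu_{S_0}=\mu_{R_0}\otimes ds/\mu_{R_0}[\tau_0]$, and the map $(x,s)\mapsto\Phi_0^s(x)$ pushes $\mu_{S_0}$ forward to the physical (SRB) measure $\mu_0$ of the classical Lorenz flow, by the standard correspondence between the SRB measure of a flow and that of a transverse cross-section return map. Hence the displayed limit equals $\mu_{S_0}(f\circ\Phi_0^\cdot)=\mu_0(f)$, which is the assertion. The main obstacle is the third step: the simultaneous passage to the limit in the measure and in an unbounded — and, for general $f$, irregular — integrand concentrated near the singular set $\Gamma$; this is where the uniform integrability of the return times and the uniform mass control for $\nu_2^\varepsilon$ near $\Gamma$ are essential, and where a strengthening of the hypothesis to $L^1$-convergence of the invariant densities — furnished, in the setting of Section \ref{statstab}, by the statistical-stability analysis — makes the argument cleanest.
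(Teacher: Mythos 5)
Your proposal follows essentially the same route as the paper: rewrite the denominator as $\nu_{2}^{\varepsilon}[\bar{\tau}_{\varepsilon}]$ via the tail formula, pass to the limit in the ratio $\nu_{2}^{\varepsilon}[g_{\varepsilon}]/\nu_{2}^{\varepsilon}[\bar{\tau}_{\varepsilon}]$ using the hypothesis $\nu_{2}^{\varepsilon}\rightharpoonup\mu_{R_{0}}$, and identify the limit with $\mu_{S_{0}}(f\circ\Phi_{0}^{\cdot})=\mu_{0}(f)$ through the suspension-flow representation. The only difference is that you make explicit (truncation of the roof function, uniform integrability of $\bar{\tau}_{\varepsilon}$ near the singular set, uniform density bounds for $\nu_{2}^{\varepsilon}$) the analytic step that the paper simply asserts as a displayed limit and defers to \cite{GV}, which is a correct and appropriate way to close that gap.
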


Therefore we are left with the proof of the existence and uniqueness of
$\nu_{2}^{\varepsilon}$ and of its weak convergence to $\mu_{R_{0}}$ in the
limit $\varepsilon\downarrow0.$ As we have already outlined, this can be done
making use of the representation of the Markov chain $\left\{  \mathfrak{x}%
_{n}\right\}  _{n\in\mathbb{Z}^{+}}$ as RDS.

\begin{remark}
We remark that the Harris recurrence property, which once proven to hold for
$\left\{  \mathfrak{x}_{n}\right\}  _{n\in\mathbb{Z}^{+}}$ entail the
existence and uniqueness of $\nu_{2}^{\varepsilon},$ will hold for the Markov
chain described in step 2 below, since by construction (see assumption A5
below) its transition probabilities satisfy condition (i) of Theorem 3.1 in
\cite{H-LL}. This in turn will imply the SLLN for sequences of r.v.'s of the
form $\left\{  f\circ\mathfrak{x}_{n}\right\}  _{n\in\mathbb{Z}^{+}},$ for any
$f\in L^{1}\left(  \nu_{2}^{\varepsilon}\right)  ,$ hence the Harris
recurrence property for $\left\{  \mathfrak{x}_{n}\right\}  _{n\in
\mathbb{Z}^{+}},$ by Remark 3 and Corollary 4, in view of Proposition 2, in
\cite{GV}.

We also remark that in the proof of Theorem \ref{main} carried on by following
the steps 1 to 8 listed below we do not need to take in to account the Harris
recurrence property of the driving Markov chain $\left\{  \mathfrak{x}%
_{n}\right\}  _{n\in\mathbb{Z}^{+}}$ of the PDMP (\ref{u_t}).

On the other hand, proceeding as in \cite{Op} is possible to prove the
invariance principle (functional CLT) and almost sure invariance principle
(almost sure functional CLT) for a class of additive functionals of the
semi-Markov process $\left(  \mathfrak{x}_{t},t\geq0\right)  $ and as a direct
consequence for a class of additive functionals of the PDMP (\ref{u_t}). We
stress that, in our case, assumption A2 in \cite{Op} can be replaced by the
requirement of the existence and uniqueness of $\nu_{2}^{\varepsilon}.$
\end{remark}

In the special case of random perturbations of $\left(  \Phi_{0}^{t}%
,t\geq0\right)  $ realized by the addition to the unperturbed phase vector
field of a constant random term, namely
\begin{equation}
\phi_{\eta}:=\phi_{0}+\eta H\ ,\;\eta\in spt\lambda_{\varepsilon}\ ,
\end{equation}
with $H\in\mathbb{R}^{3},$ we can take a step forward w.r.t. the problem of
showing the existence of an invariant measure for $\left\{  \mathfrak{x}%
_{n}\right\}  _{n\in\mathbb{Z}^{+}}.$\ Indeed, it has been shown in \cite{PP}
that the Casimir function defined in section \ref{statstab} is a Lyapunov
function for the ODE system defined by $\phi_{\eta},$ namely, for any
realization of the noise $\eta\in spt\lambda_{\varepsilon},$
\[
\left(  C\circ\Phi_{\eta}^{t}\right)  \left(  u\right)  \leq C\left(
u\right)  e^{-t\min\left(  1,\zeta,\beta\right)  }+\frac{\left\Vert H_{\eta
}\right\Vert ^{2}}{\left(  \min\left(  1,\zeta,\beta\right)  \right)  ^{2}%
}\left(  1+e^{-t\min\left(  1,\zeta,\beta\right)  }\right)  \ ,
\]
where $\mathbb{R}^{3}\ni u\longmapsto C\left(  u\right)  :=\left\langle
u,u\right\rangle =\left\Vert u\right\Vert ^{2}\in\mathbb{R}^{+}$ and $H_{\eta
}:=\eta H+H_{0}\in\mathbb{R}^{3},$ with $H_{0}:=\left(  0,0,-\beta\left(
\zeta+\gamma\right)  \right)  .$

Hence, choosing $t=\tau_{\eta}\left(  u\right)  $ we obtain
\begin{equation}
C\circ R_{\eta}\left(  u\right)  \leq a_{\varepsilon}C\left(  u\right)
+K_{\varepsilon}\left(  1+a_{\varepsilon}\right)  \ ,
\end{equation}
where
\begin{align}
a_{\varepsilon}  &  :=e^{-\min\left(  1,\zeta,\beta\right)  \inf_{\eta\in
spt\lambda_{\varepsilon}}\inf_{u\in\mathcal{M}}\tau_{\eta}\left(  u\right)
}\in\left(  0,1\right)  \;,\\
K_{\varepsilon}  &  :=\frac{\sup_{\eta\in spt\lambda_{\varepsilon}}\left\Vert
H_{\eta}\right\Vert ^{2}}{\left(  \min\left(  1,\zeta,\beta\right)  \right)
^{2}}>0\;.
\end{align}
Moreover, for any $\varsigma>0,$
\begin{align}
\left(  1+\varsigma C\right)  \circ R_{\eta}\left(  u\right)   &
\leq1+\varsigma a_{\varepsilon}C\left(  u\right)  +\varsigma K_{\varepsilon
}\left(  1+a_{\varepsilon}\right) \label{LY1}\\
&  =a_{\varepsilon}\left(  1+\varsigma C\left(  u\right)  \right)  +\bar
{K}_{\varepsilon}\ ,\nonumber
\end{align}
where $\bar{K}_{\varepsilon}:=\left(  1-a_{\varepsilon}\right)  +\varsigma
K_{\varepsilon}\left(  1+a_{\varepsilon}\right)  ,$ which entails for the
transition operator $P_{R}$ of the Markov chain $\left\{  \mathfrak{x}%
_{n}\right\}  _{n\in\mathbb{Z}^{+}}$%
\begin{equation}
C_{b}\left(  \mathcal{M}\right)  \ni\psi\longmapsto P_{R}\psi\in M_{b}\left(
\mathcal{M}\right)  \ ,
\end{equation}
the \emph{weak drift condition}
\begin{equation}
P_{R}\left(  1+\varsigma C\right)  \left(  u\right)  \leq a_{\varepsilon
}\left(  1+\varsigma C\left(  u\right)  \right)  +\bar{K}_{\varepsilon}\ ,
\label{wd}%
\end{equation}
which implies the following

\begin{lemma}
$P_{R}$ admits an invariant probability measure.
\end{lemma}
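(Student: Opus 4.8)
The plan is to deduce existence of an invariant probability measure from the weak drift condition \eqref{wd} together with a Feller-type regularity of $P_R$, via a standard tightness/Krylov--Bogolyubov argument. First I would record the structural facts already available: the state space $\mathcal{M}$ is a Borel subset of $\mathbb{R}^3$, $P_R$ maps $C_b(\mathcal{M})$ into $M_b(\mathcal{M})$, and $V:=1+\varsigma C$ is a nonnegative function with $V(u)=1+\varsigma\|u\|^2\to\infty$ as $\|u\|\to\infty$ in $\mathcal{M}$; hence the sublevel sets $\{V\le R\}$ are bounded, and — since $\mathcal{M}$ is the intersection of a closed Poincar\'e surface with the compact slab $\mathcal{M}_\epsilon$ — actually relatively compact in $\mathcal{M}$. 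Thus $V$ is a genuine Lyapunov (norm-like, coercive) function, and \eqref{wd} reads $P_R V\le a_\varepsilon V+\bar K_\varepsilon$ with $a_\varepsilon\in(0,1)$.

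Next I would iterate the drift inequality. Fixing any initial point $u_0\in\mathcal{M}$ (one may take $u_0=c_0$, say) and writing $\delta_{u_0}P_R^n$ for the $n$-step law, the bound $P_R V\le a_\varepsilon V+\bar K_\varepsilon$ gives by induction
\begin{equation}
\int_{\mathcal{M}}V\,d(\delta_{u_0}P_R^n)\le a_\varepsilon^{\,n}V(u_0)+\frac{\bar K_\varepsilon}{1-a_\varepsilon}\le V(u_0)+\frac{\bar K_\varepsilon}{1-a_\varepsilon}=:M_\varepsilon<\infty
\end{equation}
uniformly in $n$. Form the Ces\`aro averages $\pi_N:=\frac1N\sum_{n=0}^{N-1}\delta_{u_0}P_R^n$. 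For every $R>0$, Markov's inequality yields $\pi_N(\{V>R\})\le M_\varepsilon/R$ uniformly in $N$, and since $\{V\le R\}$ is relatively compact in $\mathcal{M}$ this shows the family $\{\pi_N\}_{N\ge1}$ is tight on $\mathcal{M}$. By Prokhorov's theorem there is a subsequence $\pi_{N_k}$ converging weakly to some probability measure $\mu_{\varepsilon}$ on $\mathcal{M}$.

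Finally I would check $\mu_\varepsilon$ is $P_R$-invariant. The only nonroutine point — and the one I expect to be the main obstacle — is the continuity needed to pass to the limit: the telescoping identity $\pi_N P_R-\pi_N=\frac1N(\delta_{u_0}P_R^N-\delta_{u_0})$ forces $\mu_\varepsilon P_R=\mu_\varepsilon$ provided $\int\psi\,d(\pi_{N_k}P_R)=\int P_R\psi\,d\pi_{N_k}\to\int P_R\psi\,d\mu_\varepsilon$ for $\psi\in C_b(\mathcal{M})$, i.e. provided $P_R\psi$ is $\mu_\varepsilon$-a.e.\ continuous (weak-Feller would suffice). Here one exploits the explicit form $P_R\psi(u)=\int_{[-\varepsilon,\varepsilon]}\lambda_\varepsilon(d\eta)\,\psi\!\left(R_\eta(u)\right)$: since $\varepsilon$ is small enough that $\mathcal{M}$ is transverse to every perturbed flow $\Phi_\eta^t$, the return map $(u,\eta)\mapsto R_\eta(u)$ is continuous (indeed, the transversality gives continuity of $\tau_\eta$ and hence of $R_\eta$ by the implicit function theorem applied to the first-return condition), so dominated convergence makes $P_R\psi$ continuous and bounded on $\mathcal{M}$. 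This gives the weak-Feller property, the limit passes through, and $\mu_\varepsilon$ is the desired invariant probability measure. \rule{0.5em}{0.5em}
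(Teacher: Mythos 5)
Your strategy (iterate the drift inequality (\ref{wd}), form Ces\`aro averages, extract a weak limit by tightness, pass to the limit via weak-Feller) is the classical Krylov--Bogolyubov scheme and differs from the paper's route, which reads (\ref{LY1}) and (\ref{wd}) as Doeblin--Fortet inequalities for $\left(R_{\eta}\right)_{\#}$ and $P_{R}$ acting on the dual Banach spaces $\mathbb{B}_{\varsigma}\subseteq\mathbb{B}_{0}$ and concludes from these together with the compactness of $\mathcal{M}$ (Lemma 25 of \cite{GV}). The first two thirds of your argument are correct, and in fact simpler than you present them: $\mathcal{M}$ is a closed subset of the compact slab $\mathcal{M}_{\epsilon}$, hence itself compact, so the Ces\`aro averages are automatically tight and the coercivity of $V=1+\varsigma C$ plays no role in the tightness step (on a bounded section $V$ is bounded, so the sublevel-set argument is vacuous).

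The genuine gap is the weak-Feller step. The first-return map of a Lorenz-like flow is \emph{not} continuous on $\mathcal{M}$: the section contains the hyperbolic critical point $c_{0}$ (respectively $c_{\eta}$ for the perturbed field), and on the curve $W_{loc}^{s}\left(c_{\eta}\right)\cap\mathcal{M}$ the orbit converges to the singularity and never returns, so $R_{\eta}$ is undefined there, the return time $\tau_{\eta}$ blows up nearby, and $R_{\eta}$ has a jump (with unbounded derivative) across that curve. Your appeal to transversality and the implicit function theorem only yields continuity of $\tau_{\eta}$ and $R_{\eta}$ \emph{away} from this set; it cannot give global continuity. Consistently, the paper defines $P_{R}:C_{b}\left(\mathcal{M}\right)\rightarrow M_{b}\left(\mathcal{M}\right)$, i.e. into bounded \emph{measurable} rather than continuous functions --- the authors do not claim the Feller property, and your argument needs it. To repair the Krylov--Bogolyubov route you would have to show either that averaging over $\eta$ restores continuity of $P_{R}\psi$ (which requires that for each fixed $u$ the set of $\eta$ with $u\in W_{loc}^{s}\left(c_{\eta}\right)$ is $\lambda_{\varepsilon}$-null, a nondegeneracy condition on the perturbation that you have not established), or that the limit measure does not charge the discontinuity set --- which is circular, since $\mu_{\varepsilon}$ is precisely what is being constructed. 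This obstruction is what the dual-space Doeblin--Fortet argument of the paper is designed to bypass.
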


\begin{proof}
Let $\mathbb{B}_{0}$ be the dual space of $C\left(  \mathcal{M}\right)  $ and
$\mathbb{B}_{\varsigma}$ be the dual space of $C_{\varsigma}\left(
\mathcal{M}\right)  $: the Banach space of real-valued functions on
$\mathcal{M}$ such that $\sup_{x\in\mathcal{M}}\frac{\left\vert \psi\left(
x\right)  \right\vert }{1+\varsigma C\left(  x\right)  }<\infty.\mathbb{B}%
_{\varsigma}\subseteq\mathbb{B}_{0}$ and (\ref{LY1}), (\ref{wd}) are
respectively equivalent to the Doeblin-Fortet conditions, namely, for any
$\mu\in\mathbb{B}_{\varsigma}$%
\begin{align}
\left\Vert \left(  R_{\eta}\right)  _{\#}\mu\right\Vert _{\varsigma}  &  \leq
a_{\varepsilon}\left\Vert \mu\right\Vert _{\varsigma}+\bar{K}_{\varepsilon
}\left\Vert \mu\right\Vert _{0}\ ,\\
\left\Vert \mu P_{R}\right\Vert _{\varsigma}  &  \leq a_{\varepsilon
}\left\Vert \mu\right\Vert _{\varsigma}+\bar{K}_{\varepsilon}\left\Vert
\mu\right\Vert _{0}\ , \label{wd1}%
\end{align}
where $\left\Vert \cdot\right\Vert _{0},\left\Vert \cdot\right\Vert
_{\varsigma}$ denote the norm of $\mathbb{B}_{0}$ and $\mathbb{B}_{\varsigma
}.$ This, together with the tightness of $\mathbb{B}_{0}$ due to the
compactness of $\mathcal{M}$ imply the thesis (see \cite{GV} Lemma 25).
\end{proof}

The proof of the existence and uniqueness of $\nu_{2}^{\varepsilon}$ (simply
of uniqueness in the case of the additive perturbation scheme just described)
rely on the proof of the existence and uniqueness of the stationary measure
$\nu_{1}^{\varepsilon}$ of the RDS describing the random perturbations of the
one-dimensional quotient map representing the evolution of the leaf of the
invariant foliation of $\mathcal{M}$ introduced at the beginning of section
\ref{statstab}.

In order to simplify the exposition, which contains many technical details and
requires the introduction of several quantities, we will list here the main
steps we will go through to get to the proof deferring the reader to \cite{GV}
part II for a detailed and precise description.

\begin{itemize}
\item[\textbf{Step 1}] For any $\eta\in\left[  -\varepsilon,\varepsilon
\right]  ,$ the perturbed phase field $\phi_{\eta}$ is such that the
associated flows $\left(  \Phi_{\eta}^{t},t\geq0\right)  $ admit a $C^{1}$
stable foliation in a neighborhood of the corresponding attractor. In order to
study the RDS defined by the composition of the maps $R_{\eta}:=\Phi_{\eta
}^{\tau_{\eta}}:\mathcal{M}\circlearrowleft,$ with $\tau_{\eta}:\mathcal{M}%
\circlearrowleft$ the return time map on $\mathcal{M}$ for $\left(  \Phi
_{\eta}^{t},t\geq0\right)  ,$ we show that we can restrict ourselves to study
a RDS given by the composition of maps $\bar{R}_{\eta}:\mathcal{M}%
\circlearrowleft,$ conjugated to the maps $R_{\eta}$ via a diffeomorphism
$\kappa_{\eta}:\mathcal{M}\circlearrowleft,$ leaving invariant the unperturbed
stable foliation for any realization of the noise. Namely, we can reduce the
cross-section to a unit square foliated by vertical stable leaves, as for the
geometric Lorenz flow. By collapsing these leaves on their base points via the
diffeomorphism $q,$ we conjugate the first return map $\bar{R}_{\eta}$ on
$\mathcal{M}$ to a piecewise map $\bar{T}_{\eta}$ of the interval $I.$ This
one-dimensional quotient map is expanding with the first derivative blowing up
to infinity at some point.

\item[\textbf{Step 2}] We introduce the random perturbations of the
unperturbed quotient map $T_{0}.$ Suppose $\omega=(\eta_{0},\eta_{1}%
,\cdots,\eta_{k},\cdots)$ is a sequence of values in $\left[  -\varepsilon
,\varepsilon\right]  $ each chosen independently of the others according to
the probability $\lambda_{\varepsilon}.$ We construct the concatenation
$\bar{T}_{\eta_{k}}\circ\cdots\circ\bar{T}_{\eta_{0}}$ and prove that there
exists a stationary measure $\nu_{1}^{\varepsilon},$ i.e. such that for any
bounded measurable function $g$ and $k\geq0,\int g(\bar{T}_{\eta_{k}}%
\circ\cdots\circ\bar{T}_{\eta_{0}})(x)\nu_{1}^{\varepsilon}\left(  dx\right)
\lambda_{\varepsilon}^{\otimes k}(d\eta)=\int gd\nu_{1}^{\varepsilon}.$
Clearly, $\mu_{\mathbf{T}}^{\varepsilon}:=\nu_{1}^{\varepsilon}\otimes
\mathbb{P}_{\varepsilon},$ with $\mathbb{P}_{\varepsilon}$ the probability
measure on the i.i.d. random sequences $\omega,$ is an invariant measure for
the associated RDS (see \cite{GV} formula (46)).

\item[\textbf{Step 3}] We lift the random process just defined to a Markov
process on the Poincar\'{e} surface $\mathcal{M}$ given by the sequences
$\bar{R}_{\eta_{k}}\circ\cdots\circ\bar{R}_{\eta_{0}}$ and show that the
stationary measure $\nu_{2}^{\varepsilon}$ for this process can be constructed
from $\nu_{1}^{\varepsilon}.$ We set $\mu_{\overline{\mathbf{R}}}%
^{\varepsilon}:=\bar{\nu}_{2}^{\varepsilon}\otimes\mathbb{P}_{\varepsilon}$
the corresponding invariant measure for the RDS (see \cite{GV} formula (47)).

We remark that, by construction, the conjugation property linking $R_{\eta}$
with $\bar{R}_{\eta}$ lifts to the associated RDS's. This allows us to recover
from $\mu_{\overline{\mathbf{R}}}^{\varepsilon}$ the invariant measure
$\mu_{\mathbf{R}}^{\varepsilon}$ for the RDS generated by composing the
$R_{\eta}$'s.

\item[\textbf{Step 4}] Let $\mathbf{R}:\mathcal{M}\times\Omega\circlearrowleft
$ be the map defining the RDS corresponding to the compositions of the
realizations of $R_{\eta}$ (see \cite{GV} formula (52)). We identify the set
\begin{equation}
(\mathcal{M}\times\Omega)_{\mathbf{t}}:=\{(x,\omega,s)\in\mathcal{M}%
\times\Omega\times\mathbb{R}^{+}:s\in\lbrack0,\mathbf{t}(x,\omega))\}\ ,
\end{equation}
where $\Omega:=\left[  -\varepsilon,\varepsilon\right]  ^{\mathbb{N}%
},\mathbf{t}(x,\omega):=\tau_{\pi(\omega)}(x)$ is the \emph{random roof
function} and $\pi(\omega):=\eta_{0}$ is the first coordinate of $\omega, $
with the set $\mathfrak{V}$ of equivalence classes of points $\left(
x,\omega,t\right)  $ in $\mathcal{M}\times\Omega\times\mathbb{R}^{+}$ such
that $t=s+\sum_{k=0}^{n-1}\mathbf{t}\left(  \mathbf{R}^{k}\left(
x,\omega\right)  \right)  $ for some $s\in\lbrack0,\mathbf{t}(x,\omega
)),n\geq1.$ Then, if $\hat{\pi}:\mathcal{M}\times\Omega\times\mathbb{R}%
^{+}\longrightarrow\mathfrak{V}$ is the canonical projection and, for any
$t>0,N_{t}:=\max\left\{  n\in\mathbb{Z}^{+}:\sum_{k=0}^{n-1}\mathbf{t}%
\circ\mathbf{R}^{k}\leq t\right\}  ,$ we define the \emph{random suspension
semi-flow}
\begin{equation}
(\mathcal{M}\times\Omega)_{\mathbf{t}}\ni\left(  x,\omega,s\right)
\longmapsto\mathbf{S}^{t}(x,\omega,s):=\hat{\pi}(\mathbf{R}^{N_{s+t}}\left(
x,\omega\right)  ,s+t)\in(\mathcal{M}\times\Omega)_{\mathbf{t}}\ .
\end{equation}
In particular, for instance, if $\mathbf{s}_{2}(x,\omega)=\tau_{\eta_{0}%
}(x)+\tau_{\eta_{1}}(R_{\eta_{1}}(x))\leq s+t,$ we have
\begin{equation}
\mathbf{S}^{t}(x,\omega,s)=((R_{\eta_{1}}\circ R_{\eta_{0}}(x)),\theta
^{2}\omega,s+t-\mathbf{s}_{2}(x,\omega))\ ,
\end{equation}
where $\theta:\Omega\ni\omega=(\eta_{0},\eta_{1},\cdots,\eta_{k}%
,\cdots)\longmapsto\theta\omega:=(\eta_{1},\eta_{2},\cdots,\eta_{k+1}%
,\cdots)\in\Omega$ is the left shift.

\item[\textbf{Step 5}] We build up a conjugation between the random suspension
semi-flow and a semi-flow on $U\times\Omega,$ which we will call $\left(
X^{t},t\geq0\right)  ,$ such that its projection on $U$ is a representation of
(\ref{u_t}). The rough idea is that each time the orbit crosses the
Poincar\'{e} section $\mathcal{M},$ the vector fields will change randomly.
Therefore, we start by fixing the \emph{initial condition} $(y,\omega)$ with
$y\in U$ yet not necessarily on $\mathcal{M}.$ We now begin to define the
\emph{random flow} $\left(  X^{t},t\geq0\right)  .$ Let $\pi:\Omega
\mapsto\left[  -\varepsilon,\varepsilon\right]  $ be the projection of
$\omega=(\eta_{0},\eta_{1},\cdots,\eta_{k},\cdots)$ onto the first coordinate
and call $t_{\eta_{0}}\left(  y\right)  =t_{\pi\left(  \omega\right)  }\left(
y\right)  $ the time the orbit $\Phi_{\eta_{0}}^{t}(y)=\Phi_{\pi\left(
\omega\right)  }^{t}(y)$ takes to meet $\mathcal{M}$ and set $y_{1}%
:=\Phi_{\eta_{0}}^{t_{\eta_{0}}\left(  y\right)  }(y)=\Phi_{\pi\left(
\omega\right)  }^{t_{\pi\left(  \omega\right)  }\left(  y\right)  }(y).$Then,
since $\forall\omega\in\Omega,n\geq0,\pi\left(  \theta^{n}\omega\right)
=\eta_{n},$%
\begin{align}
X^{t}(y,\omega) &  :=\left(  \Phi_{\pi\left(  \omega\right)  }^{t}%
(y),\omega\right)  ,\ 0\leq t\leq t_{\eta_{0}}\left(  y\right)  \ ;\label{X_t}%
\\
X^{t}(y,\omega) &  =\left(  \Phi_{\pi\left(  \theta\omega\right)  }%
^{t-t_{\pi\left(  \omega\right)  }\left(  y\right)  }(y_{1}),\theta
\omega\right)  ,\ t_{\eta_{0}}\left(  y\right)  <t\leq t_{\eta_{0}}\left(
y\right)  +\tau_{\eta_{1}}(y_{1})\ ;\nonumber\\
X^{t}(y,\omega) &  =\left(  \Phi_{\pi\left(  \theta^{2}\omega\right)
}^{t-t_{\pi\left(  \omega\right)  }\left(  y\right)  -\tau_{\pi\left(
\theta\omega\right)  }(y_{1})}(R_{\pi\left(  \theta\omega\right)  }%
(y_{1})),\theta^{2}\omega\right)  ,\ \left.
\begin{array}
[c]{l}%
t>t_{\eta_{0}}\left(  y\right)  +\tau_{\eta_{1}}(y_{1})\\
t\leq t_{\eta_{0}}\left(  y\right)  +\tau_{\eta_{1}}(y_{1})+\tau_{\eta_{2}%
}(R_{\eta_{1}}(y_{1}))
\end{array}
\right.  \ ,\nonumber
\end{align}
where $R_{\pi\left(  \theta\omega\right)  }(y_{1})=R_{\eta_{1}}(y_{1}),$ and
so on.

\item[\textbf{Step 6}] We are now ready to define the conjugation
$\mathbf{V}:\mathcal{M}\times\Omega\times\mathbb{R}^{+}\rightarrow
\mathbb{R}^{3}\times\Omega$ in the following way:
\begin{align}
\mathbf{V}(x,\omega,s) &  =\left(  \Phi_{\pi\left(  \omega\right)  }%
^{s}(x),\omega\right)  ,\ x\in\mathcal{M};\ \omega=(\eta_{0},\eta_{1}%
,\cdots,\eta_{k},\cdots)\in\Omega;\ 0\leq s<\tau_{\eta_{0}}(x)\\
\mathbf{V}(x,\omega,s) &  =\left(  \Phi_{\pi\left(  \theta\omega\right)
}^{s-\tau_{\pi\left(  \omega\right)  }(x)}(R_{\pi\left(  \omega\right)
}(x)),\theta\omega\right)  ;\ \tau_{\eta_{0}}(x)\leq s<\tau_{\eta_{0}}%
(x)+\tau_{\eta_{1}}(R_{\eta_{0}}(x))\ ,\nonumber
\end{align}
where $R_{\pi\left(  \omega\right)  }(x)=R_{\eta_{0}}(x),$ and so on. By
collecting the expressions given above it is not difficult to check that
$\left(  X^{t},t\geq0\right)  $ must satisfy the equation
\begin{equation}
\mathbf{V}\circ\mathbf{S}^{t}=X^{t}\circ\mathbf{V}\ .
\end{equation}
For instance, if $s+t<\tau_{\eta_{0}}(x),$ we have $X^{t}\circ\mathbf{V}%
(x,\omega,s)=\left(  X^{t}(\Phi_{\eta_{0}}^{s}(x)),\omega\right)  =\left(
\Phi_{\eta_{0}}^{t}(\Phi_{\eta_{0}}^{s}(x)),\omega\right)  =\left(  \Phi
_{\eta_{0}}^{s+t}(x),\omega\right)  ,$ while $\mathbf{V}\circ\mathbf{S}%
^{t}(x,\omega,s)=\mathbf{V}(x,\omega,s+t)=\left(  \Phi_{\eta_{0}}%
^{s+t}(x),\omega\right)  .$

\item[\textbf{Step 7}] We lift the measure $\mu_{\mathbf{R}}^{\varepsilon}$ on
the random suspension in order to get an invariant measure for $\left(
\mathbf{S}^{t},t\geq0\right)  .$ Under the assumption that the random roof
function $\mathbf{t}$ is $\mu_{\mathbf{R}}^{\varepsilon}$-summable, the
invariant measure $\mu_{\mathbf{S}}^{\varepsilon}$ for the random suspension
semi-flow acts on bounded real functions $f$ as
\begin{equation}
\int d\mu_{\mathbf{S}}^{\varepsilon}f=\left(  \int d\mu_{\mathbf{R}%
}^{\varepsilon}\mathbf{t}\right)  ^{-1}\int d\mu_{\mathbf{R}}^{\varepsilon
}\left(  \int_{0}^{\mathbf{t}}f\circ\mathbf{S}^{t}dt\right)  \ .
\end{equation}

The invariant measure for the random flow $\left(  X^{t},t\geq0\right)  $ will
then be push forward $\mu_{\mathbf{S}}^{\varepsilon}$ under the conjugacy
$\mathbf{V},$ i.e.%
\begin{equation}
\mu_{\mathbf{V}}^{\varepsilon}=\mu_{\mathbf{S}}^{\varepsilon}\circ
\mathbf{V}^{-1}\ .
\end{equation}

\item[\textbf{Step 8}] We show that the correspondence $\mu_{\mathbf{T}%
}^{\varepsilon}\longrightarrow\mu_{\mathbf{R}}^{\varepsilon}\longrightarrow
\mu_{\mathbf{V}}^{\varepsilon}$ is injective and so that the stochastic
stability of $T_{0}$ (which in fact we prove to hold in the $L^{1}\left(
I,dx\right)  $ topology) implies that of the physical measure $\mu_{0}$ of the
unperturbed flow. More precisely, we lift the evolutions defined by the
unperturbed maps $T_{0}$ and $R_{0},$ as well as that represented by the
unperturbed suspension semi-flow $\left(  S_{0}^{t},t\geq0\right)  ,$ to
evolutions defined respectively on $I\times\Omega,\mathcal{M}\times\Omega$ and
on $(\mathcal{M}\times\Omega)_{\tau_{0}}:=\{(x,\omega,s)\in\mathcal{M}%
\times\Omega\times\mathbb{R}^{+}:s\in\lbrack0,\tau_{0}(x))\}.$ By
construction, the invariant measures for these evolutions are $\mu_{T_{0}%
}\otimes\delta_{\bar{0}},\mu_{R_{0}}\otimes\delta_{\bar{0}},\mu_{S_{0}}%
\otimes\delta_{\bar{0}},$ where $\bar{0}$ denotes the sequence in $\Omega$
whose entries are all equal to $0,\delta_{\bar{0}}$ is the Dirac mass at
$\bar{0}$ and $\mu_{T_{0}},\mu_{R_{0}},\mu_{S_{0}}$ are respectively the
invariant measures for $T_{0},R_{0}$ and $S_{0}.$ Then, we prove the weak
convergence, as $\varepsilon\downarrow0,$ of $\mu_{\mathbf{T}}^{\varepsilon}$
to $\mu_{T_{0}}\otimes\delta_{\bar{0}}$ and consequently the weak convergence
of $\mu_{\mathbf{R}}^{\varepsilon}$ to $\mu_{T_{0}}\otimes\delta_{\bar{0}}.$
This will imply the weak convergence of $\mu_{\mathbf{S}}^{\varepsilon}$ to
$\mu_{S_{0}}\otimes\delta_{\bar{0}}$ and therefore the weak convergence of
$\mu_{\mathbf{V}}^{\varepsilon}$ to $\mu_{0}$ providing another proof to
Theorem \ref{main}.
\end{itemize}

We are then left with the proof of the stochastic stability of $\mu_{T_{0}}.$
Here we report a brief account on this subject referring the reader to section
8.4 in \cite{GV} for a more detailed description.

We denote by $\mathcal{L}$ the transfer operator of the unperturbed map
$T:=T_{0},$ by $\mathcal{L}_{\varepsilon}$ the random transfer operator
defined by the formula $\mathcal{L}_{\varepsilon}(f)=\int_{\left[
-1,1\right]  }d\lambda_{\varepsilon}\left(  \eta\right)  \mathcal{L}_{\eta}f,$
where $f$ belongs to some Banach space $\mathbb{B}\subset L^{1}$ and by
$\mathcal{L}_{\eta}$ the transfer operator associated to the perturbed map
$T_{\eta}.$ Let us suppose that:

\begin{description}
\item[A1] The unperturbed transfer operator $\mathcal{L}$ verifies the
so-called Lasota-Yorke inequality, namely there exists constants
$0<\varkappa<1,D>0,$ such that for any $f\in\mathbb{B}$ we have
\[
\left\Vert \mathcal{L}f\right\Vert _{\mathbb{B}}\leq\varkappa\left\Vert
f\right\Vert _{\mathbb{B}}+D\left\Vert f\right\Vert _{1}\ .
\]

\item[A2] The map $T$ preserve only one absolutely continuous invariant
probability measure $\mu$ with density $h,$ which therefore will be also
ergodic and mixing.

\item[A3] The random transfer operator $\mathcal{L}_{\varepsilon}$ verifies a
similar Lasota-Yorke inequality which, for sake of simplicity, we will assume
to hold with the same parameters $\varkappa$ and $D.$

\item[A4] There exits a measurable function $\left[  -1,1\right]
\ni\varepsilon\longmapsto\upsilon^{\prime}(\varepsilon)\in\mathbb{R}^{+}$
tending to zero when $\varepsilon\rightarrow0$ such that for $f\in
\mathbb{B}:$
\[
|||\mathcal{L}f-\mathcal{L}_{\varepsilon}f|||\leq\upsilon^{\prime}%
(\varepsilon).
\]
where the norm $|||\cdot|||$ above is so defined: $|||L|||:=\sup_{\left\Vert
f\right\Vert _{\mathbb{B}}\leq1}\left\Vert Lf\right\Vert _{1},$ for a linear
operator $L:L^{1}\circlearrowleft.$

\item[A5] The transition probability $\mathcal{Q}(x,A)$ admits a density
$\mathfrak{q}_{\varepsilon}(x,y),$ namely: $\mathcal{Q}(x,A)=\int
_{A}\mathfrak{q}_{\varepsilon}(x,y)dy;$

\item[A6] $spt\mathcal{Q}(x,\cdot)=B_{\varepsilon}(Tx),$ for any $x$ in the
interval, where $B_{\varepsilon}(z)$ denotes the ball of center $z$ and radius
$\varepsilon.$
\end{description}

By \cite{BHV} assumptions A1 - A3, A5 - A6 guarantee that there will be only
one absolutely continuous stationary measure $\nu_{1}^{\varepsilon}$ with
density $h_{\varepsilon}$ for the Markov chain with transition operator
associated to $\mathcal{L}_{\varepsilon}.$ Assumption A4 allow us to invoke
the perturbation theorem of Keller and Liverani \cite{KL} to assert that the
norm $|||\cdot|||$ of the difference of the spectral projections of the
operators $\mathcal{L}$ and $\mathcal{L}_{\varepsilon}$ associated with the
eigenvalue $1$ goes to zero when $\varepsilon\downarrow0.$ Since the
corresponding eigenspace have dimension $1,$ we conclude that $h_{\varepsilon
}\rightarrow h$ in the $L^{1}$ norm and we have proved the stochastic
stability in the strong sense.

Remember that $spt\lambda_{\varepsilon}\subset(-\varepsilon,\varepsilon)$ and
choose the maps $T_{\eta}$ with absolutely continuous invariant distribution
$\mu_{\eta}$ in such a way they are close to $T$ in the following sense:

\begin{itemize}
\item denoting by $g=\frac{1}{|T^{\prime}|}$ and $g_{\eta}=\frac{1}{|T_{\eta
}^{\prime}|}$ the potentials of the two maps defined everywhere but in the
discontinuity, or critical, points $x_{0}$ and $x_{0,\eta}$ respectively, we
have that $g$ and $g_{\eta}$ satisfy the H\"{o}lder conditions, with the same
constant and exponent (we can always reduce to this case by choosing
$\varepsilon$ sufficiently small):
\[
|g(x)-g(y)|\leq C_{h}|x-y|^{\epsilon}\ ;\ |g_{\eta}(x)-g_{\eta}(y)|\leq
C_{h}|x-y|^{\epsilon}\ ,
\]
where $(x,y)$ belong to the two domains on injectivity of the maps excluding
the critical points. We will call these domains $I_{1},I_{2}$ and $I_{1,\eta
},I_{2,\eta}$ respectively assuming that the domain labelled with $i=1$ is the leftmost.

\item The branches are \emph{horizontally close}, namely for any $z\in I$ we
have:
\[
|T_{j}^{-1}(z)-T_{j,\eta}^{-1}(z)|\leq\upsilon(\varepsilon)\ ;\ |T^{\prime
}(T_{j}^{-1}(z))-T_{\eta}^{\prime}(T_{j,\eta}^{-1}(z))|\leq\upsilon
(\varepsilon),\ j=1,2\ ,
\]
where $T_{j}^{-1},T_{j,\eta}^{-1}$ denote the inverse branches of the two
maps, $\upsilon(\varepsilon)\rightarrow0$ as $\varepsilon\downarrow0$ and in
the comparison of the derivatives we exclude $z=1.$
\end{itemize}

We now add two more assumptions \cite{BR}

\begin{description}
\item[A7] \textbf{Vertical closeness of the derivatives} For any $\eta\in
spt\lambda_{\varepsilon}$ let $k_{\eta}:=\inf\left\{  k\in\mathbb{N}%
:x_{0,\eta}\in B_{k\eta}\left(  x_{0}\right)  \right\}  $ be the the smallest
integer k for $k\eta$ be the radius of a ball centered in $x_{0}$ containing
the critical point of $T_{\eta}.$ We then assume that there exists a positive
constant $C$ such that
\[
\sup_{\eta\in spt\lambda_{\varepsilon}}\sup_{x\in B_{k_{\eta}\eta}^{c}(x_{0}%
)}\{|T_{\eta}^{\prime}(x)-T^{\prime}(x)|\}\leq C\upsilon(\varepsilon)\ .
\]

\item[A8] \textbf{Translational similarity of the branches} We suppose that,
for any $\eta\in spt\lambda_{\varepsilon},$ the branches $T_{i}%
:=T\upharpoonleft_{I_{i}}$ and $T_{i,\eta}:=T_{\eta}\upharpoonleft_{I_{i,\eta
}}$ corresponding to the same value of the index $i=1,2$ will not intersect
each other, but in $x=0,1.$
\end{description}

\begin{theorem}
\label{SSST}For any realization of the noise $\eta\in spt\lambda_{\varepsilon
},$ let $T_{\eta}$ satisfy the assumptions A1-A8. Then, $\mu_{T}$ is strongly
stochastically stable.
\end{theorem}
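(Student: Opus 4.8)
The plan is to verify the hypotheses A1--A6 of the Keller--Liverani framework (as invoked right before the theorem statement via \cite{BHV} and \cite{KL}) using the precise local expansions (\ref{T_DT1})--(\ref{T_DT4}) together with the ``horizontal closeness'', ``vertical closeness'' and ``translational similarity'' assumptions A7--A8. The Banach space $\mathbb{B}$ will be taken to be a space of generalized-bounded-variation functions (or a Sobolev-type space $W^{1,1}$ adapted to the cusp), so that A2 already holds by the Proposition on $\rho$ being Lipschitz and bounded and by \cite{CHMV}; the map $T$ is topologically mixing because the first return map $T_I$ is aperiodic and Bernoulli on the $I_n$, as established in the proof of the first Proposition.

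First I would establish the uniform Lasota--Yorke inequalities A1 and A3. For the unperturbed map this follows from the bounded-distortion estimate (\ref{H}) and the scaling $(x_0-b_p')\sim c(\alpha')^{-p/B'}$, $(b_p-x_0)\sim c(\alpha')^{-p/B}$ already used in the excerpt: the contribution of the branches accumulating at the critical point $x_0$ is summable because $|DT|^{-1}$ is integrable near $x_0$ (the exponents $B,B'\in(0,1)$ make $DT$ blow up but $g=1/|DT|$ vanish like a positive power), and the expansion constant $\varkappa$ can be taken strictly less than $1$ after passing to a high enough iterate or by inducing. For $\mathcal{L}_\varepsilon$ the same computation goes through with constants that, by A7 (vertical closeness, $|T_\eta'(x)-T'(x)|\le C\upsilon(\varepsilon)$ away from a shrinking ball $B_{k_\eta\eta}(x_0)$) and the H\"older control on $g_\eta$ with the \emph{same} $C_h,\epsilon$, differ from the unperturbed ones by $O(\upsilon(\varepsilon))$; choosing $\varepsilon$ small we may assume the identical $\varkappa,D$, which is exactly A3. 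Assumptions A5--A6 are structural: the noise law $\lambda_\varepsilon$ has a density and support in $(-\varepsilon,\varepsilon)$, so $\mathcal{Q}(x,\cdot)$ has a density supported on $B_\varepsilon(Tx)$, giving the Doeblin-type minorization used in \cite{BHV} to get uniqueness of the stationary density $h_\varepsilon$.

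The decisive step is A4: the bound $|||\mathcal{L}-\mathcal{L}_\varepsilon|||\le\upsilon'(\varepsilon)\to 0$ in the mixed norm $|||L|||=\sup_{\|f\|_{\mathbb{B}}\le1}\|Lf\|_1$. Writing $\mathcal{L}f=\sum_{j=1,2}(g\,f)\circ T_j^{-1}\mathbf{1}_{T(I_j)}$ and similarly for $\mathcal{L}_\eta$, I would split the difference into (i) a term where the inverse branches are compared at the same point --- controlled by $|T_j^{-1}(z)-T_{j,\eta}^{-1}(z)|\le\upsilon(\varepsilon)$ and $|T'(T_j^{-1}(z))-T_\eta'(T_{j,\eta}^{-1}(z))|\le\upsilon(\varepsilon)$ (horizontal closeness), integrated against an $\mathbb{B}$-bounded $f$, losing one factor of the $\mathbb{B}$-norm to absorb the $L^1\!\to\!L^1$ nature of the estimate --- and (ii) a boundary term on the symmetric difference of the images $T(I_j)\triangle T_\eta(I_{j,\eta})$, which by A8 (branches with the same index do not cross except at $0,1$) is an interval of length $O(\upsilon(\varepsilon))$ on which $g,g_\eta$ are bounded (they vanish at the endpoints $0,1$ by (\ref{T_DT1})--(\ref{T_DT2}), so no divergence there) and near $x_0$ the potential is integrable. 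The delicate point is the neighborhood of the cusp $x_0$, where $DT\to\infty$: there one must use that $g_\eta-g$ is small in $L^1$ \emph{even though} $T_\eta'$ and $T'$ individually blow up, which is precisely what the combination of A7 on $B_{k_\eta\eta}^c(x_0)$ and the integrability $\int_{B_{k_\eta\eta}(x_0)}(g+g_\eta)\,dx=O(\eta^{1-B^*})\to0$ furnishes. Granting A4, the Keller--Liverani theorem \cite{KL} gives that the spectral projection of $\mathcal{L}_\varepsilon$ onto the eigenvalue $1$ converges in $|||\cdot|||$ to that of $\mathcal{L}$; since by A2 the eigenspace is one-dimensional, the normalized eigenvectors converge, i.e. $h_\varepsilon\to h$ in $L^1(I,dx)$, which is strong stochastic stability of $\mu_T$. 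I expect the cusp estimate in A4 --- reconciling the blow-up of the individual derivatives with the $L^1$-smallness of their reciprocals' difference --- to be the main obstacle, and it is there that assumptions A7 and A8 from \cite{BR} do the essential work.
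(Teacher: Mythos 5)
Your proposal follows essentially the same route as the paper: the entire weight of the theorem rests on verifying A4, i.e. the closeness of $\mathcal{L}$ and $\mathcal{L}_{\varepsilon}$ in the mixed norm $|||\cdot|||$, deduced from the horizontal/vertical closeness and the translational similarity of the branches with the delicate estimate localized at the cusp, after which \cite{BHV} yields uniqueness of $h_{\varepsilon}$ and the Keller--Liverani theorem yields $h_{\varepsilon}\rightarrow h$ in $L^{1}$. The only deviation is the choice of Banach space: the paper works in Saussol's quasi-H\"older space $V_{\alpha}$ defined through the oscillation seminorm, which embeds continuously into $L^{\infty}$ and is tailored to the blow-up of $DT$ at $x_{0}$, rather than your generalized-BV/$W^{1,1}$ alternative.
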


\begin{proof}
We use as $\mathbb{B}$ the Banach space of quasi-H\"{o}lder functions. Namely,
for all functions $h\in L^{1}$ and $0<\alpha\leq1$ we consider the seminorm
\[
|h|_{\alpha}:=\sup_{0<\varepsilon_{1}\leq\varepsilon_{0}}\frac{1}%
{\varepsilon_{1}^{\alpha}}\int\text{osc}(h,B_{\varepsilon_{1}}(x))dx\ ,
\]
where, for any measurable set $A,\text{ osc}(h,A):=\text{Essup}_{x\in
A}h(x)-\text{Essinf}_{x\in A}h(x).$ We say that $h$ belong to the set
$V_{\alpha}\subseteq L^{1}$ if $|h|_{\alpha}<\infty.V_{\alpha}$ does not
depend on $\varepsilon_{0}$ and equipped with the norm
\[
\left\Vert h\right\Vert _{\alpha}:=\left\vert h\right\vert _{\alpha
}+\left\Vert h\right\Vert _{1}%
\]
is a Banach space and from now on $V_{\alpha}$ will denote the Banach space
$\mathbb{B}:=(V_{\alpha},\left\Vert \cdot\right\Vert _{\alpha}).$ Furthermore,
it can be proved \cite{Sa} that $\mathbb{B}$ is continuously injected into
$L^{\infty}$ and in particular $||h||_{\infty}\leq C_{s}||h||_{\alpha}$ where
$C_{s}=\frac{\max(1,\varepsilon_{0}^{\alpha})}{\varepsilon_{0}^{n}}.$ Then, we
prove that the transfer operator for $T$ and for $T_{\eta}$ are close in the
norm $|||\cdot|||$ uniformly in $\eta$ which implies $||\left(  \mathcal{L}%
-\mathcal{L}_{\varepsilon}\right)  h||_{1}\leq O(\varepsilon)||h||_{\alpha}$
(see \cite{GV} Theorem 20).
\end{proof}

The proof of the result just sketched refers to the case where $T$ and its
perturbations are of the the Lorenz cusp-type map given in figs. 1 and 2.

The same technique can be used to show the stochastic stability of the
classical Lorenz-type map (see e.g. \cite{BR} fig. 1) again under the
uniformly expandingness assumption. In this case we do not need the vertical
closeness of the derivatives; instead we have to add the additional hypothesis
that the largest elongations between $|T(0)-T_{\eta}(0)|$ and $|T(1)-T_{\eta
}(1)|$ are of order $\varepsilon$ for any $\eta$ and moreover $|T_{1}%
^{-1}(T_{\eta}(0))|$ and $1-|T_{2}^{-1}(T_{\eta}(1))|$ are also of order
$\varepsilon,$ where the last two quantities are the size of the intervals
whose images contains points that have only one preimage when we apply
simultaneously the maps $T$ and $T_{\eta}.$ Hence they must be removed when we
compare the associate transfer operators. The proof then follows the same
lines of the previous one.

\end{document}